\let\pa\partial  
\let\na\nabla  
\newcommand{\N}{{\mathbb N}}  
\newcommand{\R}{{\mathbb R}} 
\newcommand{\diver}{\operatorname{div}}
\newcommand{\T}{\mathcal{T}}
\newcommand{\E}{\mathcal{E}}
\newcommand{\dd}{\text{\rm d}}
\newcommand{\m}{\text{\rm m}}
\newcommand{\D}{\mathcal{D}}
\newtheorem{theorem}{Theorem}   
\newtheorem{proposition}[theorem]{Proposition}
\begin{document}  

\title[A finite volume scheme for a Keller-Segel model]{A finite volume scheme for
a Keller-Segel model \\ with additional cross-diffusion}

\author{Marianne Bessemoulin-Chatard}
\address{Laboratoire de Math\'ematiques, UMR6620, Universit\'e Blaise Pascal, 
24 Avenue des Landais, BP 80026, 63177 Aubi\`ere cedex, France}
\email{marianne.chatard@math.univ-bpclermont.fr} 

\author{Ansgar J\"ungel}
\address{Institute for Analysis and Scientific Computing, Vienna University of  
	Technology, Wiedner Hauptstra\ss e 8-10, 1040 Wien, Austria}
\email{juengel@tuwien.ac.at} 

\date{\today}

\thanks{The work of the first author was partially supported by the European Research Council Starting Grant 2009, project 239983-NuSiKiMo, and by the PHC Amadeus grant 2012, project 27238TD. The last author acknowledges partial support from the Austrian Science Fund (FWF), grants P20214, P22108, and I395, and the Austrian-French Project of the Austrian Exchange Service (\"OAD). The authors would like to thank 
C.~Chainais-Hillairet and F.~Filbet for fruitful suggestions and comments on this work} 

\begin{abstract}
A finite volume scheme for the (Patlak-) Keller-Segel model in two space dimensions
with an additional cross-diffusion term in the elliptic equation for the chemical
signal is analyzed. The main feature of the model is that there exists a new entropy functional yielding gradient estimates for the cell density and chemical concentration. The main features of the numerical scheme are positivity preservation, mass conservation, entropy stability, and---under additional assumptions---entropy dissipation. The existence of a discrete solution and its numerical convergence to the continuous solution is proved. Furthermore, temporal decay rates for convergence of the discrete solution to the homogeneous steady state is shown using a new discrete logarithmic Sobolev inequality. Numerical examples point out that the solutions exhibit intermediate states and that there exist nonhomogeneous stationary solutions with a finite cell density peak at the domain boundary.
\end{abstract}

\keywords{Finite volume method, chemotaxis, cross-diffusion model, discrete entropy-dissipation inequality, positivity preservation, entropy stability, numerical convergence, discrete logarithmic Sobolev inequality.}  

\subjclass[2000]{65M08, 65M12, 92C17.}  

\maketitle


\section{Introduction}

Chemotaxis, the directed movement of cells in response to chemical gradients, plays an important role in many biological fields, such as embryogenesis, immunology, cancer growth, and wound healing \cite{HiPa09,Per07}. At the macroscopic level, chemotaxis models can be formulated in terms of the cell density $n(x,t)$ and the concentration of the chemical signal $S(x,t)$. A classical model to describe the time evolution of these two variables is the (Patlak-) Keller-Segel system, suggested by Patlak in 1953 \cite{Pat53} and Keller and Segel in 1970 \cite{KeSe70}. Assuming that the time scale of the chemical signal is much larger than that of the cell movement, the classical parabolic-elliptic Keller-Segel equations read as follows:
$$
  \pa_t n = \diver(\na n-n\na S), \quad 0 = \Delta S + \mu n - S\quad \mbox{in }
  \Omega,
$$
where $\Omega\subset\R^2$ is a bounded domain or $\Omega=\R^2$, with homogeneous Neumann boundary and initial conditions. The parameter $\mu>0$ is the secretion rate at which the chemical substance is emitted by the cells. The nonlinear term $n\na S$ models the cell movement towards higher concentrations of the chemical signal. 

This model exhibits the phenomenon of cell aggregation. The more cells are aggregated, the more the attracting chemical signal is produced  by the cells. This process is counterbalanced by cell diffusion, but if the cell density is sufficiently large, the nonlocal chemical interaction dominates diffusion and results in a blow-up of the cell density. In two space dimensions, the critical threshold for blow-up is given by $M=\int_\Omega n_0dx=4\pi$ if $\Omega$ is a bounded connected domain with $C^2$ boundary \cite{Nag01} and $M=8\pi$ in the radial and whole-space case \cite{BCC12,NSY97}. The existence and uniqueness of smooth global-in-time solutions in the subcritical case is proved for bounded domains in \cite{JaLu92} and in the whole space in \cite{BDP06}. In the critical case $M=8\pi$, a global whole-space solution exists, which becomes unbounded as $t\to\infty$ \cite{BCM08}. Furthermore, there exist radially symmetric initial data such that, in the supercritical case, the solution forms a $\delta$-singularity in finite time \cite{HeVa96}. 

Motivated by numerical and modeling issues, the question how blow up can be avoided has been investigated intensively the last years. It has been suggested to modify the chemotactic sensitivity (modeling, e.g., volume-filling effects), to allow for degenerate cell diffusion, or to include suitable growth-death terms. We refer to \cite{HiJu11} for references. Another idea is to introduce additional cell diffusion in the equation for the chemical concentration \cite{CHJ12,HiJu11}. This diffusion term avoids, even for arbitrarily small diffusion constants, the blow-up and leads to the global-in-time existence of weak solutions \cite{HiJu11}. The model, which is investigated in this paper, reads as follows:
\begin{equation}
  \pa_t n = \diver(\na n-n\na S), \quad
   0 = \Delta S + \delta\Delta n + \mu n-S, \quad x\in\Omega,\ t>0, \label{ks}
\end{equation}
where $\delta>0$ is the additional diffusion constant. We impose the homogeneous Neumann boundary and initial conditions
\begin{equation}\label{bic}
  \na n\cdot\nu = \na S\cdot\nu = 0\quad\mbox{on }\pa\Omega,\ t>0, \quad
  n(\cdot,0)=n_0\quad\mbox{in }\Omega.
\end{equation}
The advantage of the additional diffusion term is that blow-up of solutions is translated to large gradients which may help to determine the blow-up time numerically. Another advantage is that the enlarged system \eqref{ks} exhibits an interesting entropy structure (see below). 

At first sight, the additional term $\delta\Delta n$ seems to complicate the mathematical analysis. Indeed, the resulting diffusion matrix of the system is neither symmetric nor positive definite, and we cannot apply the maximum principle to the equation for the chemical signal anymore. It was shown in \cite{HiJu11} that all these difficulties can be resolved by the observation that the above system possesses a logarithmic entropy,
$$
  E(t) = \int_\Omega\big(n(\log n-1)+1\big)dx,
$$
which is dissipated according to
\begin{equation}\label{1.ed}
  \frac{dE}{dt} + \int_\Omega\left(4|\na\sqrt{n}|^2 + \frac{1}{\delta}|\na S|^2
  + \frac{1}{\delta}S^2\right)dx = \frac{\mu}{\delta}\int_\Omega nS dx.
\end{equation}
Suitable Gagliardo-Nirenberg inequalities applied to the right-hand side lead to gradient estimates for $\sqrt{n}$ and $S$, which are the starting point for the global existence and long-time analysis. 

In this paper, we aim at developing a finite volume scheme which preserves the entropy structure on the discrete level by generalizing the scheme proposed in \cite{Fil06}. In contrast to \cite{Fil06}, we are able to prove the existence of discrete solutions and their numerical convergence to the continuous solution for all values of the initial mass. Moreover, we show that the discrete solution converges for large times to the homogeneous steady state if $\mu$ or $1/\delta$ are sufficiently small, using a new discrete logarithmic Sobolev inequality (Proposition \ref{prop.dlsi}).

In the literature, there exist several approaches to solve the classical Keller-Segel system numerically. The parabolic-elliptic model was approximated by using finite difference \cite{SaSu05,TSL00} or finite element methods \cite{Mar03,Sai07,SSKT10}. Also a dynamic moving-mesh method \cite{BCR05}, a variational steepest descent approximation scheme \cite{BCC08}, and a stochastic particle approximation \cite{HaSc09,HaSc11} were developed. Concerning numerical schemes for the parabolic-parabolic model (in which $\pa_t S$ is added to the second equation in \eqref{ks}), we mention the second-order central-upwind finite volume method of \cite{ChKu08}, the discontinuous finite element approach of \cite{EpIz09}, and the conservative finite element scheme of \cite{Sai12}. We also cite the paper \cite{BCW10} for a mixed finite element discretization of a Keller-Segel model with nonlinear diffusion.

There are only a few works in which a numerical analysis of the scheme was performed. Filbet proved the existence and numerical convergence of finite volume solutions \cite{Fil06}. Error estimates for a conservative finite element approximation were shown by Saito \cite{Sai07,Sai12}. Epshteyn and Izmirlioglu proved error estimates for a fully discrete discontinuous finite element method \cite{EpIz09}. Convergence proofs for other schemes can be found in, e.g., \cite{BCC08,HaSc11}.

This paper contains the first numerical analysis for the Keller-Segel model \eqref{ks} with additional cross-diffusion. Its originality comes from the fact that we ``translate'' all the analytical properties of \cite{HiJu11} on a discrete level, namely positivity preservation, mass conservation, entropy stability, and entropy dissipation (under additional hypotheses). 

The paper is organized as follows. Section \ref{sec.main} is devoted to the description of the finite volume scheme and the formulation of the main results. The existence of a discrete solution is shown in Section \ref{sec.ex}. A discrete version of the entropy-dissipation relation \eqref{1.ed} and corresponding gradient estimates are proved in Section \ref{sec.est}. These estimates allow us to obtain in Section \ref{sec.conv} the convergence of the discrete solution to the continuous one when the approximation parameters tend to zero. A proof of the discrete logarithmic Sobolev inequality is given in Section \ref{sec.dlsi}.
The long-time behavior of the discrete solution is investigated in Section \ref{sec.long}. Finally, we present some numerical examples in Section \ref{sec.num} and compare the discrete solutions to our model \eqref{ks} with those computed from the classical Keller-Segel system.


\section{Numerical scheme and main results}\label{sec.main}

In this section, we introduce the  finite volume scheme and present our main results.

\subsection{Notations and assumptions}\label{sec.nota}

Let $\Omega\subset\R^2$ be an open, bounded, polygonal subset. An admissible mesh of $\Omega$ is given by a family $\T$ of control volumes (open and convex polygons), a family $\E$ of edges, and a family of points $(x_K)_{K\in\T}$ which satisfy Definition 9.1 in \cite{EGH00}. This definition implies that the straight line between two neighboring centers of cells $(x_K,x_L)$ is orthogonal to the edge $\sigma=K|L$. For instance, Voronoi meshes are admissible meshes \cite[Example 9.2]{EGH00}. Triangular meshes satisfy the admissibility condition if all angles of the triangles are smaller than $\pi/2$ \cite[Example 9.1]{EGH00}.

We distinguish the interior edges $\sigma\in\E_{\rm int}$ and the boundary edges $\sigma\in\E_{\rm ext}$. The set of edges $\E$ equals the union $\E_{\rm int}\cup\E_{\rm ext}$. For a control volume $K\in\T$, we denote by $\E_K$ the set of its edges, by $\E_{{\rm int},K}$ the set of its interior edges, and by $\E_{{\rm ext},K}$ the set of edges of $K$ included in $\pa\Omega$. 

Furthermore, we denote by d the distance in $\R^2$ and by m the Lebesgue measure in $\R^2$ or $\R$. We assume that the family of meshes satisfies the following regularity requirement: there exists $\xi>0$ such that for all $K\in\T$ and all $\sigma\in\E_{{\rm int},K}$ with $\sigma=K|L$, it holds
\begin{equation}\label{reg.mesh}
  \dd(x_K,\sigma)\ge \xi \,\dd(x_K,x_L).
\end{equation}
This hypothesis is needed to apply discrete Sobolev-type inequalities \cite{BCF12}. Introducing for $\sigma\in\E$ the notation
$$
  d_\sigma = \left\{\begin{array}{ll}
  \dd(x_K,x_L) &\quad\mbox{if }\sigma\in\E_{\rm int},\ \sigma=K|L, \\
  \dd(x_K,\sigma) &\quad\mbox{if }\sigma\in\E_{{\rm ext},K},
  \end{array}\right.
$$
we define the transmissibility coefficient 
$$
  \tau_\sigma = \frac{\m(\sigma)}{d_\sigma}, \quad \sigma\in\E.
$$
The size of the mesh is denoted by 
$$
  \Delta x = \max_{K\in\T}\text{diam}(K).
$$
Let $T>0$ be some final time and $M_T$ the number of time steps. Then the time step size and the time points are given by, respectively,
$$
  \Delta t = \frac{T}{M_T}, \quad t^k = k\Delta t, \quad 0\le k\le M_T.
$$
We denote by $ \mathcal{D}$ an admissible space-time discretization of $ \Omega_{T}=\Omega \times (0,T)$ composed of an admissible mesh $\T$ of $ \Omega$ and the values $ \Delta t$ and $M_{T}$. The size of this space-time discretization $\mathcal{D}$ is defined by $ \eta=\max \{\Delta x, \Delta t\}$.

Let $X(\T)$ be the linear space of functions $\Omega\to\R$ which are constant on each cell $K\in\T$. We define on $X(\T)$ the discrete $L^p$ norm, discrete $W^{1,p}$ seminorm, and discrete $W^{1,p}$ norm by, respectively,
\begin{align*}
  \|u\|_{0,p,\T} &= \left(\sum_{K\in\T}\m(K)|u|^p\right)^{1/p}, \\ 
  |u|_{1,p,\T} &= \left(\sum_{\sigma\in\E_{\rm int}}\frac{\m(\sigma)}{d_\sigma^{p-1}}
  |D_\sigma u|^p\right)^{1/p}, 
\end{align*}
\begin{align*}
  \|u\|_{1,p,\T} &= \|u\|_{0,p,\T} + |u|_{1,p,\T},
\end{align*}
where $u\in X(\T)$, $1\le p<\infty$, and $D_\sigma u=|u_K-u_L|$ for $\sigma=K|L\in\E_{\rm int}$. 


\subsection{Finite volume scheme and main results}

We are now in the position to define the finite volume discretization of \eqref{ks}-\eqref{bic}. Let $ \mathcal{D}$ be a finite volume discretization of $\Omega_{T}$. The initial datum $n_0$ is approximated by its $L^2$ projection on control volumes:
\begin{equation}\label{fv.n0}
  n^0_\D = \sum_{K\in\T}n_K^0\mathbf{1}_K, \quad\mbox{where }
  n^0_K = \frac{1}{\m(K)}\int_K n_0(x)dx,
\end{equation}
and $\mathbf{1}_K$ is the characteristic function on $K$. Denoting by $n^k_K$ and $S^k_K$ approximations of the mean value of $n(\cdot,t^k)$ and $S(\cdot,t^k)$ on $K$, respectively, the numerical scheme reads as follows:
\begin{align}
  & \m(K)\frac{n^{k+1}_K-n^k_K}{\Delta t}
  - \sum_{\sigma\in\E_K}\tau_\sigma Dn_{K,\sigma}^{k+1}
  + \sum_{\substack{\sigma\in\E_{\rm int},\\ \sigma=K|L}}\tau_\sigma\big(
  (DS^{k+1}_{K,\sigma})^+ n_K^{k+1} - (DS^{k+1}_{K,\sigma})^- n_L^{k+1}\big) = 0, 
  \label{fv.n} \\
  & -\sum_{\sigma\in\E_K}\tau_\sigma DS^{k+1}_{K,\sigma}
  - \delta\sum_{\sigma\in\E_K}\tau_\sigma Dn^{k+1}_{K,\sigma}
  = \m(K)(\mu n_K^{k+1}-S_K^{k+1}), \label{fv.S}
\end{align}
for all $K\in\T$ and $0\le k\le M_T-1$. Here, $v^+=\max\{0,v\}$, $v^-=\max\{0,-v\}$, and 
\begin{equation}\label{def.D}
  DU_{K,\sigma}^k = \left\{\begin{array}{ll}
  U_L^k-U_K^k &\quad\mbox{for }\sigma=K|L\in\E_{{\rm int},K}, \\
  0 &\quad\mbox{for }\sigma\in\E_{{\rm ext},K}.
  \end{array}\right.
\end{equation}
The approximation $S^0_K$ is computed from \eqref{fv.S} with $k=-1$. This scheme is based on a fully implicit Euler discretization in time and a finite volume approach for the volume variable. The implicit scheme allows us to establish discrete entropy-dissipation estimates which would not be possible with an explicit scheme. This approximation is similar to that in \cite{Fil06} except the additional cross-diffusion term in the second equation.

The numerical approximations $n_\D$ and $S_\D$ of $n$ and $S$ are defined by
$$
  n_\D(x,t) = \sum_{K\in\T}n_K^{k+1}\mathbf{1}_K(x), \quad
  S_\D(x,t) = \sum_{K\in\T}S_K^{k+1}\mathbf{1}_K(x), \quad\mbox{where }
  x\in\Omega,\ t\in(t^k,t^{k+1}],
$$
and $k=0,\ldots,M_T-1$. Furthermore, we define approximations $\na^\D n_\D$ and $\na^\D S_\D$ of the gradients of $n$ and $S$, respectively. To this end, we introduce a dual mesh: for $K\in\T$ and $\sigma\in\E_K$, let $T_{K,\sigma}$ be defined by:
\begin{itemize}
\item If $\sigma=K|L\in\E_{{\rm int},K}$, $T_{K,\sigma}$ is the cell (``diamond'') whose vertices are given by $x_K$, $x_L$, and the end points of the edge $\sigma=K|L$.
\item If $\sigma\in\E_{{\rm ext},K}$, $T_{K,\sigma}$ is the cell (``triangle'') whose vertices are given by $x_K$ and the end points of the edge $\sigma=K|L$.
\end{itemize}
An example of construction of $T_{K ,\sigma}$ can be found in \cite{CLP03}. Clearly, $T_{K,\sigma}$ defines a partition of $\Omega$. The approximate gradient $\na^\D n_\D$ is a piecewise constant function, defined in $\Omega_T=\Omega\times(0,T)$ by
$$
  \na^\D n_\D(x,t) = \frac{\m(\sigma)}{\m(T_{K,\sigma})}Dn_{K,\sigma}^{k+1}
  \nu_{K,\sigma}, \quad x\in T_{K,\sigma},\ t\in(t^k,t^{k+1}),
$$
where $Dn_{K,\sigma}^{k+1}$ is given as in \eqref{def.D} and $\nu_{K,\sigma}$ is the unit vector normal to $\sigma$ and outward to $K$. The approximate gradient $\na^\D S_\D$ is defined in a similar way.

Our first result is the existence of solutions to the finite volume scheme.

\begin{theorem}[Existence of finite volume solutions]\label{thm.ex}
Let $\Omega\subset\R^2$ be an open, bounded, poly\-gonal subset and let $\D$ be an admissible discretization of $\Omega \times (0,T)$. The initial datum satisfies $n_0\in L^2(\Omega)$, $n_0\ge 0$ in $\Omega$. Then there exists a solution $\{(n_{K}^{k},S_{K}^{k}),\,K \in \T,\, 0 \leq k \leq M_{T}\} $ to \eqref{fv.n0}-\eqref{fv.S} satisfying
\begin{align}
  & n_K^k\ge 0 \quad\mbox{for all }K\in\T,\, k\ge 0, 
  \label{ex.1} \\
  & \sum_{K\in\T}\m(K)n_K^k = \sum_{K\in\T}\m(K)n_K^0 = \|n_0\|_{L^1(\Omega)} 
  \quad \mbox{for all } k\ge 0. \label{ex.2}
\end{align}
\end{theorem}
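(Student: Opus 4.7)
The plan is to proceed by induction on the time index $k$. The case $k=0$ is trivial for $n^0$ (given by \eqref{fv.n0}), and $S^0$ is obtained by solving the linear system \eqref{fv.S} with $k=-1$; the matrix of this system corresponds to the discrete Neumann Laplacian plus identity and is symmetric positive definite, hence invertible. The substantive part is the induction step: assuming $(n_K^k)_K$ is given and nonnegative, produce $(n_K^{k+1}, S_K^{k+1})$ solving the coupled nonlinear system \eqref{fv.n}-\eqref{fv.S} with $n_K^{k+1}\ge 0$. I would do this by a Brouwer fixed point argument on a simplex of nonnegative discrete densities with prescribed total mass.

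Concretely, I would decouple the system by defining a map $T:\bar n \mapsto \hat n$ on $X(\T)$ as follows. Given $\bar n \ge 0$, first solve \eqref{fv.S} with $\bar n$ substituted for $n^{k+1}$ everywhere on the right-hand side; as above, the $S$-matrix is symmetric positive definite, so a unique $\bar S \in X(\T)$ is determined and depends continuously on $\bar n$. Next, solve the linear system obtained from \eqref{fv.n} by freezing the upwind coefficients $(D\bar S_{K,\sigma})^\pm$ at $\bar S$ and treating $\hat n$ as the unknown. The key point is that the resulting matrix is a strictly diagonally dominant M-matrix: the implicit diffusion term $-\sum_\sigma \tau_\sigma D\hat n_{K,\sigma}$ contributes positive diagonal and nonpositive off-diagonal entries, the upwind splitting into $(D\bar S)^+$ and $(D\bar S)^-$ preserves this sign structure, and the time derivative adds $\m(K)/\Delta t$ on the diagonal. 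Consequently $\hat n$ exists, is unique, depends continuously on $\bar n$, and satisfies $\hat n \ge 0$.

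To apply Brouwer, introduce the convex compact set $\mathcal K=\{n\in X(\T):n_K\ge 0,\ \sum_K \m(K)n_K = M\}$ with $M = \|n_0\|_{L^1(\Omega)}$, and show that $T(\mathcal K)\subset\mathcal K$. Summing the linear equation for $\hat n$ over $K\in\T$, the diffusion flux vanishes by the conservativity of the two-point scheme (each interior edge $\sigma=K|L$ contributes opposite signs from $K$ and $L$, and boundary edges vanish by \eqref{def.D}), and the upwind flux vanishes by the antisymmetry $DU_{L,\sigma}=-DU_{K,\sigma}$, which gives $(D\bar S_{L,\sigma})^\pm = (D\bar S_{K,\sigma})^\mp$ and hence cancellation of the contributions from the two sides of each interior edge. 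Therefore $\sum_K\m(K)\hat n_K = \sum_K \m(K) n_K^k = M$, and $T(\mathcal K)\subset\mathcal K$. Brouwer's theorem in the finite-dimensional space $X(\T)$ yields a fixed point $n^{k+1}\in\mathcal K$, and setting $S^{k+1}=\bar S(n^{k+1})$ gives a solution of \eqref{fv.n}-\eqref{fv.S}. Properties \eqref{ex.1} and \eqref{ex.2} then follow: nonnegativity from the M-matrix structure used to define $T$, and mass conservation inductively from the summation identity above.

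The step I expect to require most care is verifying the M-matrix structure of the $\hat n$-system, because this is what makes the nonnegativity of $\hat n$ (and thus the well-definedness of $T$ on $\mathcal K$) automatic; it relies essentially on the upwind, rather than centered, discretization of the chemotactic transport term. Continuity of $T$ and compactness of $\mathcal K$ are immediate in finite dimensions, and the invertibility of the $S$-system is a standard consequence of the symmetric positive definite Neumann Laplacian plus identity. No a priori entropy bound is needed for existence at this stage; the total-mass constraint alone supplies the required a priori bound to close the Brouwer argument.
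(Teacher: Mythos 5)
Your argument is correct and follows essentially the same route as the paper: a Brouwer fixed point on a convex compact set of nonnegative discrete densities, obtained by first solving the linear $S$-system and then the linearized upwind $n$-system with frozen drift, whose inverse-positive M-matrix structure gives nonnegativity and whose conservativity gives mass conservation. The only point needing care is that the strict diagonal dominance of the frozen $n$-matrix holds with respect to \emph{columns} rather than rows (the row defect is $\m(K)/\Delta t+\sum_{\sigma}\tau_\sigma D\bar S_{K,\sigma}$, which need not be positive), exactly as the paper observes via the identity $(D\bar S_{L,\sigma})^{+}=(D\bar S_{K,\sigma})^{-}$.
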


Properties \eqref{ex.1} and \eqref{ex.2} show that the scheme is positivity preserving and mass conser\-ving. It is also entropy stable; see \eqref{ent.stab} below.

Let $(\D_\eta)_{\eta>0}$ be a sequence of admissible space-time discretizations indexed by the size $\eta= \max\{\Delta x,\Delta t\}$ of the discretization. We denote by $(\T_{\eta})_{\eta>0}$ the corresponding meshes of $\Omega$. We suppose that these discretizations satisfy \eqref{reg.mesh} uniformly in $\eta$, i.e., $\xi>0$ does not depend on $\eta$. Let $(n_\eta,S_\eta):=(n_{\D_\eta},S_{\D_\eta})$ be a finite volume solution, constructed in Theorem \ref{thm.ex}, on the discretization $\D_\eta$. We set $\na^\eta:=\na^{\D_\eta}$. Our second result concerns the convergence of $(n_\eta,S_\eta)$ to a weak solution $(n,S)$ to \eqref{ks}-\eqref{bic}.

\begin{theorem}[Convergence of the finite volume solutions]\label{thm.conv}
Let the assumptions of Theorem \ref{thm.ex} hold. Furthermore, let $(\D_\eta)_{\eta>0}$ be a sequence of admissible discretizations satisfying \eqref{reg.mesh} uniformly in $\eta$, and let $(n_\eta,S_\eta)$ be a sequence of finite volume solutions to \eqref{fv.n0}-\eqref{fv.S}. Then there exists $(n,S)$ such that, up to a subsequence,
\begin{align*}
  n_\eta \to n &\quad\mbox{strongly in }L^2(\Omega_T), \\
  \na^\eta n_\eta \rightharpoonup \na n &\quad\mbox{weakly in }L^2(\Omega_T), \\
  S_\eta\rightharpoonup S, \ \na^\eta S_\eta\rightharpoonup \na S
  &\quad\mbox{weakly in }L^2(\Omega_T),
\end{align*}
and $(n,S)\in L^2(0,T;H^1(\Omega))^2$ is a weak solution to \eqref{ks}-\eqref{bic} in the sense of
\begin{align}
  \int_0^T\int_\Omega(n\pa_t\phi - \na n\cdot\na\phi + n\na S\cdot\na\phi)dx
  + \int_\Omega n_0\phi(\cdot,0)dx &= 0, \label{weak.n} \\
  \int_0^T\int_\Omega(-\na S\cdot\na\phi - \delta\na n\cdot\na\phi + \mu n\phi-S\phi)
  dx &= 0 \label{weak.S}
\end{align}
for all test functions $\phi\in C_0^\infty(\Omega\times[0,T])$.
\end{theorem}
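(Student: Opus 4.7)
The plan is to follow the standard finite-volume convergence strategy: (i) collect the uniform a priori estimates coming from the discrete entropy-dissipation inequality of Section~\ref{sec.est}; (ii) use these to extract weakly/strongly converging subsequences; (iii) identify the weak limits of the discrete gradients with the gradients of the limit; (iv) pass to the limit in the scheme tested against a smooth function.

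First I would write down the discrete weak formulation of \eqref{fv.n}--\eqref{fv.S}. Given $\phi\in C_0^\infty(\Omega\times[0,T])$, I multiply \eqref{fv.n} by $\phi_K^k:=\phi(x_K,t^k)$, sum over $K\in\T_\eta$ and $k$, and perform discrete integration by parts (re-indexing edges rather than cells) to obtain an equation of the form
\[
T_1^\eta + T_2^\eta + T_3^\eta = 0,
\]
where $T_1^\eta$ is the discrete time derivative paired with $\phi$ (which, after discrete summation by parts in time, yields the $n\pa_t\phi$ term and the initial datum term), $T_2^\eta$ is the diffusion contribution, and $T_3^\eta$ is the upwind convective contribution. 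I do the same for \eqref{fv.S} with a similar decomposition. The uniform bounds I need are $\|n_\eta\|_{L^2(\Omega_T)}$, $\|S_\eta\|_{L^2(\Omega_T)}$, $\||\na^\eta n_\eta|\|_{L^2(\Omega_T)}$, $\||\na^\eta S_\eta|\|_{L^2(\Omega_T)}$, which are provided by the discrete entropy-dissipation estimate (combined with the Gagliardo--Nirenberg argument of \cite{HiJu11} transposed to the discrete level as done in Section~\ref{sec.est}).

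Next, compactness. The weak $L^2$ convergences of $n_\eta$, $S_\eta$, $\na^\eta n_\eta$, $\na^\eta S_\eta$ are immediate from the bounds. To upgrade $n_\eta\rightharpoonup n$ to strong convergence in $L^2(\Omega_T)$, I would establish discrete space- and time-translation estimates in the spirit of \cite[Lemma~3.3 and~4.6]{EGH00}: the space translation estimate follows from $\||\na^\eta n_\eta|\|_{L^2}\le C$, and the time translation estimate is obtained by multiplying the discrete equation \eqref{fv.n} by $n_K^{k+1}$ (or by a test function measuring the translation) and using the a priori estimates together with the $L^\infty(0,T;L^1)$ mass conservation \eqref{ex.2}. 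A discrete Riesz--Fréchet--Kolmogorov criterion then yields a subsequence with $n_\eta\to n$ strongly in $L^2(\Omega_T)$. The identification $\na^\eta n_\eta\rightharpoonup \na n$ (and similarly for $S$) is a standard consequence of the finite-volume gradient reconstruction, e.g.\ \cite[Lemma~4.4]{CLP03}: one tests against $\na\psi$ for $\psi\in C_c^\infty(\Omega)$ and uses a discrete integration by parts and consistency of the mesh.

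The main obstacle is then passing to the limit in the upwind convective term $T_3^\eta$, which after reorganization along edges reads
\[
T_3^\eta = \sum_{k,\sigma}\Delta t\,\tau_\sigma\bigl((DS_{K,\sigma}^{k+1})^+ n_K^{k+1}-(DS_{K,\sigma}^{k+1})^- n_L^{k+1}\bigr)(\phi_L^{k+1}-\phi_K^{k+1}).
\]
I would split this into its centered part (replacing the upwind value by the average $(n_K^{k+1}+n_L^{k+1})/2$) and a numerical-viscosity remainder of the form $\tfrac12\sum\Delta t\,\tau_\sigma |DS_{K,\sigma}^{k+1}|(n_K^{k+1}-n_L^{k+1})(\phi_L^{k+1}-\phi_K^{k+1})$. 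The centered part tends to $\int_0^T\int_\Omega n\,\na S\cdot\na\phi\,dx\,dt$ because it is a bilinear pairing between the $L^2$-weakly convergent sequence $\na^\eta S_\eta$ and the product $n_\eta \na\phi$, which converges strongly in $L^2$ by the strong convergence of $n_\eta$ and the smoothness of $\phi$ (one has to control the edge averaging, using regularity~\eqref{reg.mesh} and $\phi\in C^\infty$). The remainder is bounded by $C\|\na\phi\|_\infty\,\eta\,\||\na^\eta S_\eta|\|_{L^2}\,\||\na^\eta n_\eta|\|_{L^2}=O(\eta)$. The linear terms $T_1^\eta$, $T_2^\eta$ and the corresponding terms in \eqref{fv.S} pass to the limit straightforwardly by the weak convergences and strong convergence of $\phi$-interpolates, together with the initial-condition term \eqref{fv.n0}. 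Combining these identifications yields \eqref{weak.n}--\eqref{weak.S}, completing the proof.
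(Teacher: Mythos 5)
Your overall architecture (a priori bounds from the entropy estimate, extraction of weak limits, identification of the reconstructed gradients via \cite[Lemma~4.4]{CLP03}, and the centered-plus-numerical-viscosity splitting of the upwind term with the weak--strong pairing of $\na^\eta S_\eta$ against $n_\eta\na\phi$) matches the paper's proof, which delegates the final limit passage to Propositions~4.2 and~4.3 of \cite{Fil06} and only adds the remark that the new cross-diffusion term is linear in $n_\eta$ and hence harmless. The one place where you genuinely diverge is the compactness step, and it is also the one place where your argument has a gap. You propose to prove strong $L^2(\Omega_T)$ convergence of $n_\eta$ via a Riesz--Fr\'echet--Kolmogorov criterion, obtaining the time-translation estimate ``by multiplying the discrete equation by $n_K^{k+1}$ (or by a test function measuring the translation).'' If the test function is the time increment $n_\eta(\cdot,t+\tau)-n_\eta(\cdot,t)$ itself, the upwind convective term produces, after summation by parts, expressions of the type $\sum_\sigma\tau_\sigma|DS^{k+1}_{K,\sigma}|\,n_K^{k+1}\,|D(n_\eta(t+\tau)-n_\eta(t))_{K,\sigma}|$, and Cauchy--Schwarz then requires a bound on $\sum_\sigma\tau_\sigma|DS^{k+1}_{K,\sigma}|^2(n_K^{k+1})^2$, i.e.\ essentially an $L^\infty$ bound on $n_\eta$ or an $L^2$ bound on the product $n_\eta\na^\eta S_\eta$; neither is available from \eqref{unif.n}--\eqref{nas} or Proposition~\ref{prop.bound}, which only give $n_\eta\na^\eta S_\eta$ in $L^1$.

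The paper circumvents exactly this difficulty: it estimates the time translates only in the dual norm, testing the scheme against a fixed smooth $\phi\in L^\infty(0,T;H^{2+\varepsilon}(\Omega))$ (so that $|\phi|_{1,\infty,\T_\eta}$ is controlled and the convective term is handled by $\|n_\eta^{k+1}\|_{0,2,\T_\eta}|S_\eta^{k+1}|_{1,2,\T_\eta}|\phi|_{1,\infty,\T_\eta}$), which yields a uniform bound on the translates of $n_\eta$ in $L^1(0,T;(H^{2+\varepsilon}(\Omega))')$; strong convergence then follows from the discrete Aubin lemma of \cite{DrJu12} combined with the spatial $L^2(0,T;H^1)$ bound of Proposition~\ref{prop.bound}. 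You should replace your Kolmogorov step by this dual-norm translate estimate plus a discrete Aubin--Lions argument (or, if you insist on Kolmogorov, carry out the time-translate estimate in a negative Sobolev norm rather than in $L^2$). With that repair, the rest of your proof goes through as written.
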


It is shown in \cite[Theorem 1.3]{HiJu11} that, if the secretion rate $\mu>0$ is sufficiently small or the diffusion parameter $\delta>0$ is sufficiently large, the solution $(n,S)$ to \eqref{ks}-\eqref{bic} converges exponentially fast to the homogeneous steady state $(n^*,S^*)$, where $n^*=\|n_0\|_{L^1(\Omega)}/\m(\Omega)$ and $S^*=\mu n^*$. The proof in \cite{HiJu11} is based on the logarithmic Sobolev 
inequality. Therefore, we state first a novel discrete version of this inequality,
which is proved in Section \ref{sec.dlsi}.

\begin{proposition}[Discrete logarithmic Sobolev inequality]\label{prop.dlsi}
Let $\Omega\subset\R^d$ $(d\ge 1)$ be an open bounded polyhedral domain and let $\T$ be
an admissible mesh of $\Omega$ satisfying \eqref{reg.mesh}. Then there
exists a constant $C_L>0$ only depending on $\Omega$, $d$, and $\xi$ such that
for all $u\in X(\T)$,
$$
  \int_\Omega u^2\log\frac{u^2}{m^{-1}\|u\|_{0,2,\T}^2}dx
  \le C_L|u|_{1,2,\T}^2,
$$
where we abbreviated $m=\m(\Omega)$.
\end{proposition}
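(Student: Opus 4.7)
The plan is to prove the inequality via Rothaus's reduction to zero-mean functions, followed by Jensen's inequality combined with the discrete Sobolev embedding of \cite{BCF12}.

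First I would decompose $u=c+v$ with $c=m^{-1}\int_\Omega u\,dx$ constant and $v=u-c\in X(\T)$ satisfying $\int_\Omega v\,dx=0$. Rothaus's inequality, a purely algebraic fact on any probability space and here applied to $(\Omega,m^{-1}\,dx)$, yields
$$
\int_\Omega u^2\log\frac{u^2}{m^{-1}\|u\|_{0,2,\T}^2}\,dx \le \int_\Omega v^2\log\frac{v^2}{m^{-1}\|v\|_{0,2,\T}^2}\,dx + 2\|v\|_{0,2,\T}^2.
$$
Constants are killed by the discrete gradient, so $|u|_{1,2,\T}=|v|_{1,2,\T}$, and the extra $\|v\|_{0,2,\T}^2$ will be absorbed at the end into $C_P\,|u|_{1,2,\T}^2$ via the discrete Poincar\'e--Wirtinger inequality available from \cite{BCF12}. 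It thus suffices to prove the inequality for the mean-zero $v$.

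For the mean-zero case I fix any $p>2$ admitting a discrete Sobolev embedding ($p<\infty$ when $d\le 2$; $p<2d/(d-2)$ when $d\ge 3$). Applying Jensen's inequality to the concave function $\log$ with respect to the probability measure $v^2\,dx/\|v\|_{0,2,\T}^2$ and rearranging gives
$$
\int_\Omega v^2\log\frac{v^2}{m^{-1}\|v\|_{0,2,\T}^2}\,dx \le \frac{2p\,\|v\|_{0,2,\T}^2}{p-2}\log\frac{\|v\|_{0,p,\T}}{\|v\|_{0,2,\T}} + \|v\|_{0,2,\T}^2\log m.
$$
The discrete Sobolev inequality of \cite{BCF12}, whose constant is uniform in $\T$ under \eqref{reg.mesh}, together with the Poincar\'e--Wirtinger inequality for the zero-mean $v$, furnishes $\|v\|_{0,p,\T}\le C_{PS}\,|v|_{1,2,\T}$. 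Linearizing via the elementary bound $\log t\le t$ (valid for $t>0$) applied to $t=\|v\|_{0,p,\T}^2/\|v\|_{0,2,\T}^2$ cancels the factor $\|v\|_{0,2,\T}^2$ and controls the first term by $\tfrac{p\,C_{PS}^2}{p-2}|v|_{1,2,\T}^2$. The residual $\|v\|_{0,2,\T}^2\log m$ is absorbed into $|v|_{1,2,\T}^2$ by Poincar\'e when $\log m>0$ and simply dropped otherwise, producing an explicit $C_L$ depending only on $p$, $C_P$, $C_{PS}$, and $m$.

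The principal non-algebraic input is the discrete Sobolev (and Poincar\'e--Wirtinger) inequality with a constant uniform in the discretization, which depends critically on the mesh regularity \eqref{reg.mesh} and is exactly what \cite{BCF12} provides. Once this is in hand, the remaining pieces---Rothaus's inequality, the Jensen rearrangement, and the linearization $\log t\le t-1$---are purely algebraic and transfer without modification from the continuous setting; hence no genuinely new analytic difficulty arises beyond the uniform discrete Sobolev estimate.
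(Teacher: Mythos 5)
Your proposal is correct and follows essentially the same route as the paper's proof: reduction to the mean-zero part via Rothaus's inequality (the paper cites inequality (4.2.19) of \cite{GuZe03}), Jensen's inequality for the probability measure $v^2\,dx/\|v\|_{0,2,\T}^2$, the linearization $\log t\le t-1$, and the discrete Sobolev and Poincar\'e--Wirtinger inequalities of \cite{BCF12}. The only differences are cosmetic (you apply Rothaus before Jensen rather than after, and work with the unnormalized $v$), so no further comparison is needed.
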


The constant $C_L>0$ can be made more precise. Let $C_S(q)>0$ be the constant in
the discrete Sobolev inequality \cite[Theorem 4]{BCF12}
\begin{equation}\label{dsi}
  \|u\|_{0,q,\T} \le \frac{C_S(q)}{\sqrt{\xi}}\|u\|_{1,2,\T}\quad\mbox{for }
  u\in X(\T),
\end{equation}
where $1\le q\le 2d/(d-2)$ (and $1\le q<\infty$ if $d\le 2$), and
let $C_P(r)>0$ be the constant in the discrete Poincar\'e-Wirtinger inequality \cite[Prop.~1]{BCF12}
\begin{equation}\label{dpi}
  \|u-\bar u\|_{0,r,\T} \le \frac{C_P(r)}{\sqrt{\xi}}|u|_{1,2,\T}
  \quad\mbox{for } u\in X(\T),
\end{equation}
where $1\le r\le 2$ and $\bar u=m^{-1}\int_\Omega u(x)dx$, $m=\m(\Omega)$. Then
$$
  C_L = \frac{q}{(q-2)\xi}\left(C_S(q)^2 + \frac{C_S(q)^2C_P(2)^2}{\xi} 
  + \frac{q-4}{q}C_P(2)^2\right).
$$

For our result on the long-time behavior, we introduce the discrete relative entropy
$$
  E[n^k|n^*] = \sum_{K\in\T}\m(K)n_K^k\log\left(\frac{n_K^k}{n^*}\right)\ge 0, \quad
  k\ge 0.
$$

\begin{theorem}[Long-time behavior of finite volume solutions]\label{thm.long}
Let the assumptions of Theorem \ref{thm.ex} hold and let $(n_\D,S_\D)$ be a solution to \eqref{fv.n0}-\eqref{fv.S}. Then for all $k\ge 0$,
$$
  E[n^{k+1}|n^*] + \Delta t(1-C^*)\big|\sqrt{n^{k+1}}\big|_{1,2,\T}^2
  + \frac{\Delta t}{2\delta}\|S^{k+1}-S^*\|_{1,2,\T}^2 \le E[n^k|n^*],
$$
where $C^*=\mu^2 C(\Omega)^2\|n_0\|_{L^1(\Omega)}/(\delta\xi)$, $C(\Omega)>0$ only depending on $\Omega$, and $\xi$ is the parameter in \eqref{reg.mesh}. In particular, if $C^* < 1$, the discrete relative entropy is nonincreasing and 
\begin{align*}
  \|n^k-n^*\|_{0,1,\T} &\le \sqrt{4E[n^0|n^{*}]\|n_0\|_{L^1(\Omega)}}
  \left(1+\frac{1-C^*}{C_L}\Delta t \right)^{-k/2}, \\
  \|S^k-S^*\|_{1,2,\T}
  &\le \sqrt{\frac{2\delta E[n^0|n^{*}]}{\Delta t}}
  \left(1+\frac{1-C^*}{C_L}\Delta t
  \right)^{-(k-1)/2}, \quad k\in\N,
\end{align*}
where $C_L>0$ is the constant in the discrete logarithmic Sobolev inequality
(see Proposition \ref{prop.dlsi}). In particular, for each $K\in\T$,
$$
  (n_K^k,S_K^k)\to (n^*,S^*)\quad\mbox{as }k\to\infty.
$$
\end{theorem}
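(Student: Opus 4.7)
The plan is to establish the stated one-step discrete entropy-dissipation inequality by testing the two equations with suitable multipliers, and then iterate it with the help of Proposition~\ref{prop.dlsi} and a discrete Csisz\'ar-Kullback-Pinsker inequality.

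\medskip
\noindent\textbf{Step 1 (entropy balance for $n$).} I would multiply \eqref{fv.n} by $\Delta t\log(n_K^{k+1}/n^*)$ and sum over $K\in\T$. Convexity of $x\mapsto x\log(x/n^*)-x+n^*$ together with mass conservation (Theorem~\ref{thm.ex}) bounds the time contribution below by $E[n^{k+1}|n^*]-E[n^k|n^*]$. Discrete integration by parts on the diffusion flux combined with the entropy chain rule $(a-b)(\log a-\log b)\ge 4(\sqrt a-\sqrt b)^2$ produces a contribution of at least $4\Delta t|\sqrt{n^{k+1}}|_{1,2,\T}^2$. The upwind convection term, after edge antisymmetry and the elementary log-inequality $n_K\log(n_K/n_L)\ge n_K-n_L$ (and its companion for the opposite upwind direction), yields a contribution bounded below by $-\Delta t\sum_\sigma\tau_\sigma Dn_\sigma^{k+1}DS_\sigma^{k+1}$, which is the discrete analogue of $\int n\,\na S\cdot\na\log n=\int\na n\cdot\na S$.

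\medskip
\noindent\textbf{Step 2 (balance for $S$ and cancellation).} I would then rewrite \eqref{fv.S} using $S^*=\mu n^*$, multiply by $\Delta t(S_K^{k+1}-S^*)/\delta$, sum over $K$, and integrate by parts on the edges to obtain the identity
$$
\frac{\Delta t}{\delta}\|S^{k+1}-S^*\|_{1,2,\T}^2+\Delta t\sum_\sigma\tau_\sigma Dn_\sigma^{k+1}DS_\sigma^{k+1}=\frac{\Delta t\,\mu}{\delta}\sum_{K\in\T}\m(K)(n_K^{k+1}-n^*)(S_K^{k+1}-S^*).
$$
Adding this to the inequality from Step~1 cancels the indefinite edge product, producing a discrete counterpart of the continuous entropy-dissipation relation \eqref{1.ed}. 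A Young inequality with balance parameter chosen so that $(1/(2\delta))\|S^{k+1}-S^*\|_{0,2,\T}^2$ can be absorbed on the left splits the remaining cross term and leaves a single bad term $(\Delta t\,\mu^2/(2\delta))\|n^{k+1}-n^*\|_{0,2,\T}^2$ on the right.

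\medskip
\noindent\textbf{Step 3 (closing and iteration).} To dispose of this bad term I would use a discrete Gagliardo-Nirenberg/Nash-type bound of the form $\|n^{k+1}-n^*\|_{0,2,\T}^2\le 2C(\Omega)^2\|n_0\|_{L^1(\Omega)}\xi^{-1}|\sqrt{n^{k+1}}|_{1,2,\T}^2$, derived from the discrete Sobolev embedding \eqref{dsi} with $q=4$ applied to $u=\sqrt{n^{k+1}}$, the identity $\|\sqrt{n^{k+1}}\|_{0,2,\T}^2=\|n_0\|_{L^1(\Omega)}$, and the exact relation $\|n-n^*\|_{0,2,\T}^2=\|n\|_{0,2,\T}^2-\|n_0\|_{L^1(\Omega)}^2/\m(\Omega)$ that eliminates the additive $L^1$-remainder coming from interpolation. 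Absorbing numerical constants into $C(\Omega)$, this yields the stated one-step inequality with coefficient $(1-C^*)$ in front of $|\sqrt{n^{k+1}}|_{1,2,\T}^2$. Under $C^*<1$, Proposition~\ref{prop.dlsi} applied to $u=\sqrt{n^{k+1}}$ gives $E[n^{k+1}|n^*]\le C_L|\sqrt{n^{k+1}}|_{1,2,\T}^2$, so the one-step inequality becomes the geometric recursion $(1+(1-C^*)\Delta t/C_L)\,E[n^{k+1}|n^*]\le E[n^k|n^*]$ and iteration gives $E[n^k|n^*]\le E[n^0|n^*](1+(1-C^*)\Delta t/C_L)^{-k}$. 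The $L^1$-decay of $n^k-n^*$ then follows from the discrete Csisz\'ar-Kullback-Pinsker inequality $\|n-n^*\|_{0,1,\T}^2\le 4\|n_0\|_{L^1(\Omega)}E[n|n^*]$; the $\|S^k-S^*\|_{1,2,\T}$ decay follows directly from the $(\Delta t/(2\delta))\|S^k-S^*\|_{1,2,\T}^2$ term on the left of the one-step inequality; cellwise convergence is immediate since $X(\T)$ is finite-dimensional and all discrete norms are equivalent on it.

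\medskip
\noindent\textbf{Main obstacle.} The two delicate points will be the signed handling of the upwind convection in Step~1 so that its combination with the $S$-equation test in Step~2 yields exact cancellation of $\sum_\sigma\tau_\sigma Dn_\sigma^{k+1}DS_\sigma^{k+1}$ (this forces the careful use of $n_K\log(n_K/n_L)\ge n_K-n_L$ on the outflow vertex and its mirror on the inflow vertex), and the derivation of the discrete Gagliardo-Nirenberg/Nash-type bound on $\|n-n^*\|_{0,2,\T}^2$ with the precise $C(\Omega)^2\|n_0\|_{L^1}/\xi$ prefactor: the $\|n_0\|_{L^1}$ factor enters through mass conservation used inside the two-dimensional interpolation, while the $\xi^{-1}$ factor is inherited from the discrete Sobolev inequality~\eqref{dsi}.
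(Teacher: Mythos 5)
Your Steps 1 and 2 reproduce the paper's argument: the entropy balance for \eqref{fv.n} tested with $\log n_K^{k+1}$ (equivalently $\log(n_K^{k+1}/n^*)$, by mass conservation), the reformulation of \eqref{fv.S} with $S^*=\mu n^*$ tested with $(S_K^{k+1}-S^*)/\delta$, the exact cancellation of $\sum_\sigma\tau_\sigma Dn_\sigma^{k+1}DS_\sigma^{k+1}$, and the Young splitting of the cross term are all exactly what the paper does (the paper extracts the Fisher-information term with constant $1$ via the bound $|Dn_{K,\sigma}|/\sqrt{\bar n_\sigma}\ge|D(\sqrt n)_{K,\sigma}|$ rather than your constant $4$ from the logarithmic-mean inequality; either works and yours is only sharper). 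The iteration via Proposition~\ref{prop.dlsi}, the Csisz\'ar--Kullback inequality, and the extraction of the $S$-decay from the dissipation term are also identical to the paper.

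The genuine gap is in Step 3, the bound $\|n^{k+1}-n^*\|_{0,2,\T}^2\le C(\Omega)^2(2/\xi)\|n_0\|_{L^1(\Omega)}\,|\sqrt{n^{k+1}}|_{1,2,\T}^2$, which is the one estimate that makes the whole theorem work. Your proposed mechanism --- the discrete Sobolev inequality \eqref{dsi} with $q=4$ applied to $\sqrt{n^{k+1}}$ combined with the identity $\|n-n^*\|_{0,2,\T}^2=\|n\|_{0,2,\T}^2-\|n_0\|_{L^1(\Omega)}^2/\m(\Omega)$ --- does not close. The Sobolev/Gagliardo--Nirenberg route gives an upper bound of the schematic form $\|n\|_{0,2,\T}^2\le C\|n_0\|_{L^1(\Omega)}\big(\|n_0\|_{L^1(\Omega)}+|\sqrt{n^{k+1}}|_{1,2,\T}^2\big)$ with a constant $C$ that has no reason to equal $1/\m(\Omega)$; subtracting $\|n_0\|_{L^1(\Omega)}^2/\m(\Omega)$ therefore leaves a nonzero additive remainder $\propto\|n_0\|_{L^1(\Omega)}^2$, and the resulting one-step inequality would carry a spurious constant source term, destroying both the monotonicity of $E[n^k|n^*]$ and the geometric decay. (Moreover \eqref{dsi} controls the full norm $\|\sqrt n\|_{1,2,\T}$, so squaring to reach $\|n\|_{0,2,\T}^2=\|\sqrt n\|_{0,4,\T}^4$ produces $\xi^{-2}$, not the claimed $\xi^{-1}$.) The paper avoids this entirely by going through $BV$: since $n^*$ is the mean of $n^{k+1}$, the two-dimensional Poincar\'e--Sobolev embedding \cite[Theorem 2]{BCF12} gives $\|n^{k+1}-n^*\|_{0,2,\T}\le C(\Omega)\,|n^{k+1}|_{1,1,\T}$ with no additive remainder, and then the Cauchy--Schwarz estimate \eqref{w11}, namely $|n^{k+1}|_{1,1,\T}\le(2/\xi)^{1/2}\|n_0\|_{L^1(\Omega)}^{1/2}|\sqrt{n^{k+1}}|_{1,2,\T}$ (which uses $\sum_{\sigma\in\E_K}\m(\sigma)\dd(x_K,\sigma)=2\m(K)$, the mesh regularity \eqref{reg.mesh}, and mass conservation), yields exactly the homogeneous bound with the prefactor $2C(\Omega)^2\|n_0\|_{L^1(\Omega)}/\xi$ and hence the stated $C^*$. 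You need to replace your interpolation argument by this $W^{1,1}\hookrightarrow L^2$ chain; the rest of your proof then goes through.
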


In \cite{HiJu11}, the long-time behavior of solutions is shown under the condition $\mu^2\|n_0\|_{L^1(\Omega)}/\delta < C(\Omega)$ for a constant $C(\Omega)>0$ appearing in some Poincar\'e-Sobolev inequality. We observe that our condition depends additionally on the regularity of the finite volume mesh. The convergence
of $\|n^k-n^*\|_{0,1,\T}$ is approximately exponential for sufficiently
small $\Delta t>0$. The convergence result for $\|S^k-S^*\|_{1,2,\T}$ is weaker
in view of the factor $(\Delta t)^{-1}$. In \cite{HiJu11}, the exponential
convergence of $S-S^*$ is shown in the $L^2(\Omega)$ norm only and therefore,
our result is not surprising. 


\section{Existence of finite volume solutions}\label{sec.ex}

In this section, we prove Theorem \ref{thm.ex}. The proof is based on the Brouwer fixed-point theorem. Let $k\in\{1,\ldots,M_T-1\}$ and let $(n^k,S^k)$ be a solution to \eqref{fv.n} and \eqref{fv.S}, with $k+1$ replaced by $k$, satisfying \eqref{ex.1}-\eqref{ex.2}. We introduce the set
$$
  Z = \{u\in X(\T):u\ge 0\mbox{ in }\Omega,\ 
  \|u\|_{L^1(\Omega)}\le \|n_0\|_{L^1(\Omega)}\}.
$$
The finite-dimensional space $Z$ is convex and compact. In the following, we define the fixed-point operator by solving a linearized problem. First, we construct $\widetilde S\in X(\T)$ using the following scheme:
\begin{equation}\label{tildes}
  -\sum_{\sigma\in\E_K}\tau_\sigma D\widetilde S_{K,\sigma} + \m(K)\widetilde S_K
  = \delta\sum_{\sigma\in\E_K}\tau_\sigma  Dn_{K,\sigma}^k + \mu\m(K)n_K^k, \quad
  K\in\T.
\end{equation}
Second, we compute $\widetilde n\in X(\T)$ using the scheme
\begin{equation}\label{tilden}
  \frac{\m(K)}{\Delta t}(\widetilde n_K-n_K^k)
  - \sum_{\sigma\in\E_K}\tau_\sigma D\widetilde n_{K,\sigma}
  + \sum_{\substack{\sigma\in\E_{\rm int},\\ \sigma=K|L}}\tau_\sigma
  \big((D\widetilde S_{K,\sigma})^+ \widetilde n_K - (D\widetilde S_{K,\sigma})^- 
  \widetilde n_L\big) = 0.
\end{equation}

{\em Step 1: Existence and uniqueness for \eqref{tildes} and \eqref{tilden}.} The linear system \eqref{tildes} can be written as $A\widetilde S=b$, where $A$ is the matrix with the elements
$$
  A_{K,K} = \sum_{\sigma\in\E_K}\tau_\sigma + \m(K), \quad
  A_{K,L} = -\tau_\sigma\quad\mbox{ for }K,L\in\T\mbox{ with }
  \sigma=K|L\in\E_{{\rm int},K},
$$
and $b$ is the vector with the elements
$$
  b_K = \delta\sum_{\sigma\in\E_K}\tau_\sigma Dn_{K,\sigma}^k + \mu\m(K)n_K^k.
$$
Since for all $L\in\T$,
$$
  |A_{L,L}| - \sum_{K\neq L}|A_{K,L}| = \m(L) > 0,
$$
the matrix $A$ is strictly diagonally dominant with respect to the columns and hence, $A$ is invertible. This shows the unique solvability of \eqref{tildes}.

Similarly, \eqref{tilden} can be written as $B\widetilde n=c$, where $B$ is the matrix with the elements
\begin{align*}
  B_{K,K} &= \frac{\m(K)}{\Delta t} + \sum_{\sigma\in\E_K}\tau_\sigma
  (1+(D\widetilde S_{K,\sigma})^+), \quad K\in\T, \\
  B_{K,L} &= -\tau_\sigma(1+(D\widetilde S_{K,\sigma})^-), 
  \quad\mbox{ for }K,L\in\T\mbox{ with }\sigma=K|L\in\E_{{\rm int},K},
\end{align*}
and $c$ is the vector with the elements $c_K=\m(K)n^k_K/\Delta t$, $K\in\T$. The diagonal elements of $B$ are positive, and the off-diagonal elements are nonpositive. Moreover, $B$ is strictly diagonal dominant with respect to the columns since for all $\sigma=K|L\in\E_{{\rm int},K}$, we have $D\widetilde S_{L,\sigma}=-D\widetilde S_{K,\sigma}$ which yields $(D\widetilde S_{L,\sigma})^+=(D\widetilde S_{K,\sigma})^-$ and hence,
$$
  |B_{L,L}| - \sum_{K\neq L}|B_{K,L}| = \frac{\m(L)}{\Delta t} > 0.
$$
We infer that $B$ is an M-matrix and invertible, which gives the existence and uniqueness of a solution $\widetilde n$ to \eqref{tilden}.

The M-matrix property of $B$ implies that $B^{-1}$ is positive. As a consequence, since $n^k$ and $c$ are nonnegative componentwise, by the induction hypothesis, $\widetilde n=B^{-1}c$ is nonnegative componentwise. This means that $\widetilde n$ satisfies \eqref{ex.1}. Summing \eqref{tilden} over $K\in\T$, we compute
$$
  \sum_{K\in\T}\m(K)\widetilde n_K = \sum_{K\in\T}\m(K)n_K^k 
  = \|n_0\|_{L^1(\Omega)}.
$$

{\em Step 2: Continuity of the fixed-point operator.} The solution to \eqref{tildes} and \eqref{tilden} defines the fixed-point 
operator $F:Z\to Z$, $F(n)=\widetilde n$. We have to show that $F$ is continuous. For this, let $(n^\gamma)_{\gamma\in\N}\subset Z$ be a sequence converging to $n$ in $X(\T)$ as $\gamma\to\infty$. Setting $\widetilde n^\gamma=F(n^\gamma)$ and $\widetilde n=F(n)$, we have to prove that $\widetilde n^\gamma\to \widetilde n$ in $X(\T)$. Using the scheme \eqref{tildes}, we construct first $\widetilde S^\gamma$ (respectively, $\widetilde S$) from  $n^\gamma$ (respectively, $n$). Then, using the scheme \eqref{tilden}, we obtain $\widetilde n^\gamma$ (respectively, $\widetilde n$).

We claim that $\widetilde S^\gamma-\widetilde S\to 0$ in $X(\T)$ as $\gamma\to\infty$. Indeed, since the map $n \mapsto \tilde{S}$, where $ \tilde{S}$ is constructed from \eqref{tildes}, is linear on the finite dimensional space $X(\T)$, it is obviously continuous. 
Moreover, using the scheme \eqref{tilden} and performing the same computations as in the proof of Theorem 2.1 in \cite{Fil06}, it follows that
$$
  \sum_{K\in\T}\m(K)|\widetilde n^\gamma_K-\widetilde n_K|
  \le 2\Delta t\bigg(\sum_{K\in\T}|\widetilde n_K|^2\bigg)^{1/2}
  \bigg(\sum_{K\in\T}\sum_{\sigma\in\E_K}\tau_\sigma
  |D(\widetilde S^\gamma-\widetilde S)_{K,\sigma}|^2\bigg)^{1/2}.
$$
The right-hand side converges to zero as $\widetilde S^\gamma\to\widetilde S$ in $X(\T)$, which proves that $\widetilde n^\gamma \to \widetilde n$ in $X(\T)$.

{\em Step 3: Application of the fixed-point theorem.} The assumptions of the Brouwer fixed-point theorem are satisfied, implying the existence of a fixed point of $F$, i.e.\ of a solution $n^{k+1}$ to \eqref{fv.n} satisfying \eqref{ex.1}. We have shown in Step 1 that \eqref{ex.2} holds for $n^{k+1}$. Finally, we construct $S^{k+1}$ using scheme \eqref{fv.S}.


\section{A priori estimates}\label{sec.est}

The proof of Theorem \ref{thm.conv} is based on suitable a priori estimates which are shown in this section. We introduce a discrete version of the entropy functional used in \cite{HiJu11}:
$$
  E^k = \sum_{K\in\T}\m(K)H(n_K^k), \quad\mbox{where }H(s)=s(\log s-1)+1.
$$

\begin{proposition}[Entropy stability]\label{prop.ent}
There exists a constant $C>0$ only depending on $\Omega$, $\mu$, $\delta$, $\|n_0\|_{L^1(\Omega)}$, and $\xi$ (see \eqref{reg.mesh}) such that for all $k\ge 0$,
\begin{align}
  E^{k+1}-E^k &+ \frac{\Delta t}{2}\sum_{\sigma\in\E_{\rm int}}\tau_\sigma
  \big|D(\sqrt{n^{k+1}})_{K,\sigma}\big|^2 + \frac{\Delta t}{\delta}
  \sum_{K\in\T}\m(K)|S^{k+1}|^2 \nonumber \\
  &{}+ \frac{\Delta t}{\delta}
  \sum_{\sigma\in\E_{\rm int}}\tau_\sigma|DS_{K,\sigma}^{k+1}|^2 \le C\Delta t.
  \label{ent.stab}
\end{align}
\end{proposition}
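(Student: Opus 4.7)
The plan is to translate the continuous entropy-dissipation identity \eqref{1.ed} to the discrete level; the scheme was constructed precisely so that $\log n_K^{k+1}$ is an admissible test function. First I multiply \eqref{fv.n} by $\Delta t\log n_K^{k+1}$ and sum over $K\in\T$. Convexity of $H(s)=s(\log s-1)+1$ (with $H'(s)=\log s$) gives $\sum_K\m(K)(n_K^{k+1}-n_K^k)\log n_K^{k+1}\ge E^{k+1}-E^k$. A discrete summation by parts turns the diffusion contribution into $\Delta t\sum_{\sigma\in\E_{\rm int}}\tau_\sigma D(\log n^{k+1})_{K,\sigma}Dn^{k+1}_{K,\sigma}$, and the elementary inequality $(a-b)(\log a-\log b)\ge 4(\sqrt a-\sqrt b)^2$ bounds this from below by $4\Delta t\,\bigl|\sqrt{n^{k+1}}\bigr|_{1,2,\T}^2$.

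For the upwind convection I group contributions per interior edge $\sigma=K|L$. Exploiting $DS_{L,\sigma}^{k+1}=-DS_{K,\sigma}^{k+1}$, so that $(DS_{L,\sigma}^{k+1})^\pm=(DS_{K,\sigma}^{k+1})^\mp$, the per-edge sum reduces to $\tau_\sigma\bigl[(DS_{K,\sigma}^{k+1})^+n_K^{k+1}-(DS_{K,\sigma}^{k+1})^-n_L^{k+1}\bigr]\bigl(\log n_K^{k+1}-\log n_L^{k+1}\bigr)$. A case distinction on the sign of $DS_{K,\sigma}^{k+1}$, combined with the inequality $a\log(a/b)\ge a-b$ applied to the upwind value of $n$, bounds this per-edge contribution from below by $-\tau_\sigma DS^{k+1}_{K,\sigma}Dn^{k+1}_{K,\sigma}$. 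Next I couple with the elliptic equation by testing \eqref{fv.S} with $\Delta t S_K^{k+1}/\delta$ and summing; a discrete summation by parts converts the cross term $\Delta t\sum_\sigma\tau_\sigma DS^{k+1}_{K,\sigma}Dn^{k+1}_{K,\sigma}$ into $\frac{\mu\Delta t}{\delta}\sum_K\m(K)n_K^{k+1}S_K^{k+1}-\frac{\Delta t}{\delta}\|S^{k+1}\|_{1,2,\T}^2$. Combining the two steps yields the discrete counterpart of \eqref{1.ed}:
\[
E^{k+1}-E^k+4\Delta t\,\bigl|\sqrt{n^{k+1}}\bigr|_{1,2,\T}^2+\frac{\Delta t}{\delta}\|S^{k+1}\|_{1,2,\T}^2\le\frac{\mu\Delta t}{\delta}\sum_{K\in\T}\m(K)n_K^{k+1}S_K^{k+1}.
\]

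It remains to absorb the nonlinear right-hand side. Applying Cauchy--Schwarz together with the two-dimensional discrete Gagliardo--Nirenberg inequality from \cite{BCF12} (of the form $\|u\|_{0,4,\T}^2\le C\|u\|_{0,2,\T}\|u\|_{1,2,\T}$) to $u=\sqrt{n^{k+1}}$, and using the mass-conservation identity $\|\sqrt{n^{k+1}}\|_{0,2,\T}^2=\|n_0\|_{L^1(\Omega)}$ from Theorem~\ref{thm.ex}, I obtain $\|n^{k+1}\|_{0,2,\T}^2\le C\|n_0\|_{L^1(\Omega)}\bigl(\|n_0\|_{L^1(\Omega)}+\bigl|\sqrt{n^{k+1}}\bigr|_{1,2,\T}^2\bigr)$. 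A Young inequality with parameters tuned so that no fraction of the $\|S^{k+1}\|_{1,2,\T}^2$ dissipation on the left is consumed then produces a constant residual of size $C\Delta t$ plus a multiple of $\bigl|\sqrt{n^{k+1}}\bigr|_{1,2,\T}^2$ which is absorbed into the gradient term on the left, reducing its coefficient from $4$ to $\tfrac12$. The main obstacle is precisely this last absorption: the cross term $\sum n_K^{k+1}S_K^{k+1}$ must be controlled solely by the cell dissipation $\bigl|\sqrt{n^{k+1}}\bigr|_{1,2,\T}^2$ and a constant depending on $\Omega,\mu,\delta,\|n_0\|_{L^1(\Omega)},\xi$, and it is the interplay of $L^1$ mass conservation with the 2D discrete Gagliardo--Nirenberg inequality (whose constant carries the mesh-regularity factor $1/\sqrt\xi$ from \cite{BCF12}) that makes this possible without any smallness assumption on the parameters.
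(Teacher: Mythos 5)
Your argument tracks the paper's proof almost step for step up to the discrete entropy--dissipation inequality: the convexity step, the per-edge treatment of the diffusion and upwind terms (your elementary inequalities $(a-b)(\log a-\log b)\ge 4(\sqrt a-\sqrt b)^2$ and $a\log(a/b)\ge a-b$ replace the paper's Taylor-expansion bookkeeping with the intermediate value $\bar n_\sigma^{k+1}$, and even yield the sharper coefficient $4$ in place of $1$), and the coupling obtained by testing \eqref{fv.S} with $\Delta t\,S_K^{k+1}/\delta$ all reproduce \eqref{aux.e}. The genuine gap is in the final absorption. You pair the right-hand side as $\|n^{k+1}\|_{0,2,\T}\|S^{k+1}\|_{0,2,\T}$ and invoke the Gagliardo--Nirenberg inequality at $\theta=1/2$, which gives $\|n^{k+1}\|_{0,2,\T}^2\le C\|n_0\|_{L^1(\Omega)}\|\sqrt{n^{k+1}}\|_{1,2,\T}^2$. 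This makes the Young step \emph{critical}: in $\frac{\mu}{\delta}\|n^{k+1}\|_{0,2,\T}\|S^{k+1}\|_{0,2,\T}\le\frac{\alpha}{\delta}\|S^{k+1}\|_{0,2,\T}^2+\frac{\mu^2}{4\alpha\delta}\|n^{k+1}\|_{0,2,\T}^2$ the product of the two coefficients is fixed at $\mu^2/(4\delta^2)$ whatever $\alpha$ is, so you cannot simultaneously keep $\alpha\le1$ (to avoid overdrawing the $S$-dissipation available on the left) and make the resulting coefficient $\mu^2 C\|n_0\|_{L^1(\Omega)}/(4\alpha\delta)$ of $|\sqrt{n^{k+1}}|_{1,2,\T}^2$ smaller than the $4$ you have available minus the $\tfrac12$ you wish to retain. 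The step closes only when $\mu^2\|n_0\|_{L^1(\Omega)}/\delta$ is sufficiently small --- exactly the hypothesis of Theorem \ref{thm.long}, which Proposition \ref{prop.ent} must not require. Your remark that the tuning consumes ``no fraction of the $\|S^{k+1}\|_{1,2,\T}^2$ dissipation'' cannot be realized at all: any Young splitting of $\sum_K\m(K)n_K^{k+1}S_K^{k+1}$ must charge a positive multiple of a norm of $S^{k+1}$, and that norm is controlled only by the dissipation.

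The paper avoids this by choosing a \emph{subcritical} H\"older pairing, $\sum_K\m(K)n_K^{k+1}S_K^{k+1}\le\|n^{k+1}\|_{0,6/5,\T}\|S^{k+1}\|_{0,6,\T}$, together with the discrete Gagliardo--Nirenberg inequality at $\theta=1/6$, so that the dissipation enters the right-hand side with total power $1+2\theta=4/3<2$. The three-exponent Young inequality with $p_1=2$, $p_2=3$, $p_3=6$ then assigns an \emph{arbitrarily small} coefficient ($\delta/2$) to $\|n_0\|_{L^1(\Omega)}+|\sqrt{n^{k+1}}|_{1,2,\T}^2$ at the price of a large but harmless constant $C(\delta,\mu,\|n_0\|_{L^1(\Omega)})$, with no restriction on the parameters. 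To repair your proof, replace the $L^2\times L^2$ pairing by such an $L^{q/(q-1)}\times L^{q}$ pairing with $q>2$ (equivalently, use any Gagliardo--Nirenberg exponent $\theta<1/2$); the rest of your argument then goes through unchanged.
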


\begin{proof}
By the convexity of $H$, we find that
$$
  E^{k+1}-E^k = \sum_{K\in\T}\m(K) \big(H(n_K^{k+1})-H(n_K^k)\big)
  \le \sum_{K\in\T}\m(K)\log(n_K^{k+1})(n_K^{k+1}-n_K^k).
$$
Inserting the scheme \eqref{fv.n}, we can write
\begin{align}
  E^{k+1}-E^k &\le \Delta t\sum_{\substack{\sigma\in\E_{\rm int},\\ \sigma=K|L}}\tau_\sigma
  (n^{k+1}_L-n^{k+1}_K)\log n_K^{k+1} \nonumber \\
  &\phantom{xx}{}
  - \Delta t\sum_{\substack{\sigma\in\E_{\rm int},\\ \sigma=K|L}}\tau_\sigma
  \big((DS^{k+1}_{K,\sigma})^+ n_K^{k+1} - (DS^{k+1}_{K,\sigma})^- n_L^{k+1}\big)
  \log n_K^{k+1} 
  =: I_1 + I_2. \label{aux1}
\end{align}
Now, we argue similarly as in the proof of Lemma 3.1 in \cite{Fil06}. We employ the symmetry of $\tau_\sigma$ and a Taylor expansion of $\log$ around $n_K^{k+1}$ to infer that
\begin{align*}
  I_1 &= -\Delta t\sum_{\substack{\sigma\in\E_{\rm int},\\ \sigma=K|L}}
  \tau_\sigma(n_K^{k+1}-n_L^{k+1})(\log n_K^{k+1}-\log n_L^{k+1}) \\
  &= -\Delta t\sum_{\substack{\sigma\in\E_{\rm int},\\ \sigma=K|L}}
  \tau_\sigma(n_K^{k+1}-n_L^{k+1})^2 \frac{1}{\bar n_\sigma^{k+1}} 
  = -\Delta t\sum_{\substack{\sigma\in\E_{\rm int},\\ \sigma=K|L}}
  \tau_\sigma \left(\frac{Dn_{K,\sigma}^{k+1}}{\sqrt{\bar n_\sigma^{k+1}}}\right)^2,
\end{align*}
where $\bar n_\sigma^{k+1}=t_\sigma n_K^{k+1}+(1-t_\sigma)n_L^{k+1}$ for some $t_\sigma\in(0,1)$. We perform a summation by parts in $I_2$, using again the symmetry of $\tau_\sigma$:
$$
  I_2 = -\Delta t\sum_{\substack{\sigma\in\E_{\rm int},\\ \sigma=K|L}}
  \tau_\sigma
  \big((DS^{k+1}_{K,\sigma})^+ n_K^{k+1} - (DS^{k+1}_{K,\sigma})^- n_L^{k+1}\big)
  (\log n_K^{k+1}-\log n_L^{k+1}).
$$
Reordering the sum and using the expression for $\bar n_\sigma^{k+1}$ in the Taylor expansion of log, it is shown in \cite[p.~468]{Fil06} that
$$
  I_2 \le -\Delta t
  \sum_{\substack{\sigma\in\E_{\rm int},\\ \sigma=K|L}}\tau_\sigma
  \bar n_\sigma^{k+1} DS_{K,\sigma}^{k+1}(\log n_K^{k+1}-\log n_L^{k+1}).
$$
The Taylor expansion shows that $\bar n_\sigma^{k+1}(\log n_K^{k+1}-\log n_L^{k+1})= n_K^{k+1}-n_L^{k+1}$, which gives
$$
  I_2 \le -\Delta t\sum_{\substack{\sigma\in\E_{\rm int},\\ \sigma=K|L}}\tau_\sigma
  DS_{K,\sigma}^{k+1}(n_K^{k+1}-n_L^{k+1}) 
  = \Delta t\sum_{\substack{\sigma\in\E_{\rm int},\\ \sigma=K|L}}\tau_\sigma
  DS_{K,\sigma}^{k+1}Dn_{K,\sigma}^{k+1}.
$$
Summarizing the estimates for $I_1$ and $I_2$, \eqref{aux1} leads to
\begin{equation}\label{aux2}
  E^{k+1}-E^k 
  \le -\Delta t\sum_{\substack{\sigma\in\E_{\rm int},\\ \sigma=K|L}}
  \tau_\sigma \left(\frac{Dn_{K,\sigma}^{k+1}}{\sqrt{\bar n_\sigma^{k+1}}}\right)^2
  + \Delta t\sum_{\substack{\sigma\in\E_{\rm int},\\ \sigma=K|L}}\tau_\sigma
  DS_{K,\sigma}^{k+1}Dn_{K,\sigma}^{k+1}.
\end{equation}
The first term can be estimated for $\sigma=K|L$ as follows:
\begin{equation}\label{aux3}
  \frac{|Dn_{K,\sigma}^{k+1}|}{\sqrt{\bar n_\sigma^{k+1}}}
  = \frac{\sqrt{n_L^{k+1}}+\sqrt{n_K^{k+1}}}{\sqrt{\bar n_\sigma^{k+1}}}
  |D(\sqrt{n^{k+1}})_{K,\sigma}| \ge |D(\sqrt{n^{k+1}})_{K,\sigma}|.
\end{equation}
In order to bound the second term, we multiply the scheme \eqref{fv.S} by $(\Delta t/\delta)S_K^{k+1}$ and sum over $K\in\T$:
\begin{align*}
  0 &= \frac{\Delta t}{\delta}\sum_{K\in\T}\sum_{\sigma\in\E_K}\tau_\sigma
  DS_{K,\sigma}^{k+1} S_K^{k+1} 
  + \Delta t\sum_{K\in\T}\sum_{\sigma\in\E_K}\tau_\sigma
  Dn_{K,\sigma}^{k+1} S_K^{k+1} \\
  &\phantom{xx}{}+ \frac{\mu\Delta t}{\delta}\sum_{K\in\T}\m(K)
  n_K^{k+1}S_K^{k+1} - \frac{\Delta t}{\delta}\sum_{K\in\T}\m(K)
  |S_K^{k+1}|^2.
\end{align*}
By summation by parts, we find that
\begin{align}
  \Delta t\sum_{\substack{\sigma\in\E_{\rm int},\\ \sigma=K|L}}\tau_\sigma
  Dn_{K,\sigma}^{k+1}DS_{K,\sigma}^{k+1}
  &= -\frac{\Delta t}{\delta}\sum_{\substack{\sigma\in\E_{\rm int},\\ \sigma=K|L}}
  \tau_\sigma|DS_{K,\sigma}^{k+1}|^2 
  + \frac{\mu\Delta t}{\delta}\sum_{K\in\T}\m(K)n_K^{k+1}S_K^{k+1} \nonumber \\
  &\phantom{xx}{}-\frac{\Delta t}{\delta}\sum_{K\in\T}\m(K)|S_K^{k+1}|^2.
  \label{aux4}
\end{align}
Inserting \eqref{aux4} into \eqref{aux2} and employing \eqref{aux3}, it follows that
\begin{align}
  E^{k+1}-E^k &+ \Delta t
  \sum_{\substack{\sigma\in\E_{\rm int},\\ \sigma=K|L}}\tau_\sigma
  |D(\sqrt{n^{k+1}})_{K,\sigma}|^2 
  + \frac{\Delta t}{\delta}\sum_{K\in\T}\m(K)|S_K^{k+1}|^2 \nonumber \\
  &{}+ \frac{\Delta t}{\delta}\sum_{\substack{\sigma\in\E_{\rm int},\\ \sigma=K|L}}
  \tau_\sigma |DS_{K,\sigma}^{k+1}|^2 
  \le \frac{\mu\Delta t}{\delta}\sum_{K\in\T}\m(K)n_K^{k+1}S_K^{k+1}. \label{aux.e}
\end{align}
It remains to estimate the right-hand side. We follow the proof of Proposition 3.1 in \cite{HiJu11}. The H\"older inequality yields
\begin{equation}\label{aux5}
  \mu\sum_{K\in\T}\m(K)n_K^{k+1}S_K^{k+1}
  \le \mu\|n^{k+1}\|_{0,6/5,\T}\|S^{k+1}\|_{0,6,\T}.
\end{equation}
The discrete $L^6$ norm of $S^{k+1}$ can be bounded by the discrete $H^1$ norm 
using the discrete Sobolev inequality \eqref{dsi}
$$
  \|S^{k+1}\|_{0,6,\T} \le C\|S^{k+1}\|_{1,2,\T},
$$
where $C>0$ depends only on $\Omega$ and $\xi$. For the discrete $L^{6/5}$ norm of $n^{k+1}$, we employ the discrete Gagliardo-Nirenberg inequality \cite[Theorem 3]{BCF12} with $\theta=1/6$:
$$
  \|n^{k+1}\|_{0,6/5,\T} = \big\|\sqrt{n^{k+1}}\big\|_{0,12/5,\T}^2
  \le C\big\|\sqrt{n^{k+1}}\big\|_{0,2,\T}^{2(1-\theta)}
  \big\|\sqrt{n^{k+1}}\big\|_{1,2,\T}^{2\theta},
$$
where $C>0$ depends only on $\Omega$ and $\xi$. Mass conservation \eqref{ex.2} implies that
$$
  \|n^{k+1}\|_{0,6/5,\T} 
  \le C\|n_0\|_{L^1(\Omega)}^{1-\theta}\left(\|n_0\|_{L^1(\Omega)}^{1/2} 
  + \big|\sqrt{n^{k+1}}\big|_{1,2,\T}\right)^{2\theta}.
$$
With these estimates, \eqref{aux5} becomes
\begin{align*}
  \mu\sum_{K\in\T}\m(K)n_K^{k+1}S_K^{k+1}
  &\le C\mu\|S^{k+1}\|_{1,2,\T}\|n_0\|_{L^1(\Omega)}^{1-\theta}
  \left(\|n_0\|_{L^1(\Omega)}^{1/2} 
  + \big|\sqrt{n^{k+1}}\big|_{1,2,\T}\right)^{2\theta} \\
  &\le 2C\mu\|S^{k+1}\|_{1,2,\T}\|n_0\|_{L^1(\Omega)}^{1-\theta}
  \left(\|n_0\|_{L^1(\Omega)} 
  + \big|\sqrt{n^{k+1}}\big|_{1,2,\T}^2\right)^{\theta}.
\end{align*}
Then, by Young's inequality with $p_1=2$, $p_2=2/(1-2\theta)$, $p_3=1/\theta$,
\begin{align*}
  \mu\sum_{K\in\T}\m(K)n_K^{k+1}S_K^{k+1}
  &\le \frac12\|S^{k+1}\|_{1,2,\T}^2 
  + C(\delta,\mu)\|n_0\|_{L^1(\Omega)}^{2(1-\theta)/(1-2\theta)} \\
  &\phantom{xx}{}+ \frac{\delta}{2}\left(\|n_0\|_{L^1(\Omega)}
  + \big|\sqrt{n^{k+1}}\big|_{1,2,\T}^2\right) \\
  &\le \frac12\|S^{k+1}\|_{1,2,\T}^2 
  + \frac{\delta}{2}\big|\sqrt{n^{k+1}}\big|_{1,2,\T}^2 
  + C(\delta,\mu,\|n_0\|_{L^1(\Omega)}).
\end{align*}
Together with \eqref{aux.e}, this finishes the proof.
\end{proof}

Summing \eqref{ent.stab} over $k=0,\ldots,M_T-1$ and using the mass conservation \eqref{ex.2}, we conclude immediately the following $\eta$-uniform bounds for the family of solutions $(n_\eta,S_\eta)_{\eta>0}$ to \eqref{fv.n0}-\eqref{fv.S} with discretizations $\D_\eta$:
\begin{align}
  & (n_\eta),\ (n_\eta\log n_\eta)\mbox{ are bounded in }L^\infty(0,T;L^1(\Omega)), 
  \label{unif.n} \\
  & (\na^\eta n_\eta)\mbox{ is bounded in }L^2(\Omega_T), \label{unif.nan} \\
  & (S_\eta)\mbox{ is bounded in }L^2(0,T;H^1(\Omega)). \label{nas}
\end{align}

\begin{proposition}\label{prop.bound}
The family $(n_\eta)_{\eta>0}$ is bounded in $L^2(0,T;H^1(\Omega))$.
\end{proposition}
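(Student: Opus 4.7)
The plan is to upgrade the $\sqrt{n}$-level gradient control provided by the entropy stability \eqref{ent.stab} to a full $L^2(0,T;H^1(\Omega))$ estimate on $n_\eta$. The strategy has two steps: first, I would derive an $L^2(\Omega_T)$ bound on $n_\eta$ by a Gagliardo-Nirenberg interpolation applied to $\sqrt{n_\eta}$; second, I would test the elliptic scheme \eqref{fv.S} with $n^{k+1}$ so as to convert the already available discrete $H^1$-control of $S_\eta$ into a discrete $H^1$-control of $n_\eta$.

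\textbf{Step 1.} I apply the discrete two-dimensional Gagliardo-Nirenberg inequality \cite[Theorem 3]{BCF12} with $p=4$ (i.e.\ $\theta=1/2$) to $v^{k+1}:=\sqrt{n^{k+1}}$:
$$\|v^{k+1}\|_{0,4,\T}^2 \le C\,\|v^{k+1}\|_{0,2,\T}\,\|v^{k+1}\|_{1,2,\T}.$$
Squaring, using the identity $\|v^{k+1}\|_{0,4,\T}^4 = \|n^{k+1}\|_{0,2,\T}^2$ together with the mass conservation \eqref{ex.2} (which gives $\|v^{k+1}\|_{0,2,\T}^2 = \|n_0\|_{L^1(\Omega)}$), and then multiplying by $\Delta t$ and summing over $k$, I would conclude from \eqref{ent.stab} that
$$\|n_\eta\|_{L^2(\Omega_T)}^2 \le C\,\|n_0\|_{L^1(\Omega)}\sum_{k=0}^{M_T-1}\Delta t\,\|v^{k+1}\|_{1,2,\T}^2 \le C.$$

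\textbf{Step 2.} Multiplying \eqref{fv.S} by $n_K^{k+1}$ and summing over $K\in\T$, a discrete integration by parts (using $DU_{L,\sigma}=-DU_{K,\sigma}$ for $\sigma=K|L$) transforms the left-hand side into
$$\sum_{\sigma\in\E_{\rm int}}\tau_\sigma DS^{k+1}_{K,\sigma}\, Dn^{k+1}_{K,\sigma} \;+\; \delta\,|n^{k+1}|_{1,2,\T}^2,$$
while the right-hand side becomes $\mu\|n^{k+1}\|_{0,2,\T}^2 - \sum_K\m(K)\,S_K^{k+1}n_K^{k+1}$. Applying Young's inequality to the cross term $DS\cdot Dn$ with weight $\delta$ (absorbing $\tfrac{\delta}{2}|n^{k+1}|_{1,2,\T}^2$ onto the left) and bounding the $Sn$-term by $\tfrac12\|S^{k+1}\|_{0,2,\T}^2+\tfrac12\|n^{k+1}\|_{0,2,\T}^2$ yields
$$\frac{\delta}{2}\,|n^{k+1}|_{1,2,\T}^2 \le \frac{1}{2\delta}\,|S^{k+1}|_{1,2,\T}^2 + \Big(\mu+\tfrac12\Big)\|n^{k+1}\|_{0,2,\T}^2 + \tfrac12\|S^{k+1}\|_{0,2,\T}^2.$$
Multiplying by $\Delta t$ and summing over $k$, the $S$-terms are controlled by \eqref{ent.stab} and the $\|n^{k+1}\|_{0,2,\T}^2$-term by Step 1, which delivers a uniform bound on $\sum_k\Delta t\,|n^{k+1}|_{1,2,\T}^2$. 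Combined with Step 1, this is precisely the claim.

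\textbf{Main obstacle.} The delicate point is Step 1: the entropy only controls $\sqrt{n_\eta}$ in the discrete $H^1$, and the apparent chicken-and-egg difficulty of bounding $n_\eta$ in $L^2(\Omega_T)$ without already knowing a gradient bound on $n_\eta$ must be circumvented through the 2D-specific embedding $L^\infty_t L^2_x\cap L^2_t H^1_x\hookrightarrow L^4_{t,x}$, realized at the discrete level via \cite[Theorem 3]{BCF12}. Once $n_\eta\in L^2(\Omega_T)$ is in hand, Step 2 is a routine finite-volume analogue of testing the elliptic equation $-\Delta S-\delta\Delta n=\mu n-S$ against $n$.
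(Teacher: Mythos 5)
Your proof is correct, and it follows the same two-step skeleton as the paper's: first an $L^2(\Omega_T)$ bound on $n_\eta$ extracted from the entropy-dissipation control of $\sqrt{n_\eta}$, then a discrete $H^1$ bound obtained by testing the elliptic scheme \eqref{fv.S} with $n^{k+1}$. The realizations differ in two places. In Step 1 you interpolate via the discrete Gagliardo--Nirenberg inequality with $\theta=1/2$, $\|\sqrt{n^{k+1}}\|_{0,4,\T}^2\le C\|\sqrt{n^{k+1}}\|_{0,2,\T}\|\sqrt{n^{k+1}}\|_{1,2,\T}$, whereas the paper first proves the pointwise-in-time bound \eqref{w11}, namely $|n^{k+1}|_{1,1,\T}\le(2/\xi)^{1/2}|\sqrt{n^{k+1}}|_{1,2,\T}\|n_0\|_{L^1(\Omega)}^{1/2}$ via Cauchy--Schwarz and the geometric identity $\sum_{\sigma\in\E_K}\m(\sigma)\dd(x_K,\sigma)=2\m(K)$, and then invokes the two-dimensional discrete embedding $W^{1,1}\hookrightarrow L^2$. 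Both routes lean on the discrete functional inequalities of \cite{BCF12} and are equally valid; the paper's detour through $W^{1,1}$ has the side benefit that the explicit estimate \eqref{w11} is reused later in the long-time analysis (see \eqref{aux8}), while your interpolation is the more direct path if one only wants this proposition. In Step 2 you absorb the cross term $\sum_\sigma\tau_\sigma DS^{k+1}_{K,\sigma}Dn^{k+1}_{K,\sigma}$ by Young's inequality against the already-known bound \eqref{nas} on $|S^{k+1}|_{1,2,\T}$, whereas the paper feeds the same identity back into the entropy inequality \eqref{aux2}; your variant is self-contained and slightly simpler, at no cost. The only point to make explicit is that the summed entropy inequality also requires $E^0<\infty$, which follows from $n_0\in L^2(\Omega)$ exactly as in the paper's derivation of \eqref{unif.n}--\eqref{nas}.
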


\begin{proof}
First, we claim that $(n_\eta)$ is bounded in $L^2(0,T;W^{1,1}(\Omega))$. To simplify the notation, we write $n_\eta^{k+1}:=n_\eta(\cdot,t^{k+1}) \in X(\T_{\eta})$. Applying the Cauchy-Schwarz inequality, we obtain
\begin{align*}
  |n_\eta^{k+1}|_{1,1,\T_{\eta}}
  &= \sum_{\substack{\sigma\in\E_{\rm int},\\ \sigma=K|L}}\m(\sigma)
  |n_L^{k+1}-n_K^{k+1}| \\
  &\le \sum_{K\in\T_\eta}\sum_{\sigma\in\E_K}\sqrt{\tau_\sigma}
  \big|D(\sqrt{n^{k+1}})_{K,\sigma}\big|\cdot\sqrt{\m(\sigma)d_\sigma}
  \sqrt{n_K^{k+1}} \\
  &\le \big|\sqrt{n_\eta^{k+1}}\big|_{1,2,T_{\eta}}
  \left(\sum_{K\in\T_\eta}\Big(\sum_{\sigma\in\E_K}\m(\sigma)d_\sigma\Big)
  n_K^{k+1}\right)^{1/2}.
\end{align*}
Observe that in two space dimensions,
$$
  \sum_{\sigma\in\E_K}\m(\sigma)\dd(x_K,\sigma) = 2\m(K),
$$
since the straight line between $x_K$ and $x_L$ is orthogonal to the edge $\sigma=K|L$. Using this property, the mesh regularity assumption \eqref{reg.mesh}, and the mass conservation \eqref{ex.2}, it follows that
\begin{align}
  |n_\eta^{k+1}|_{1,1,\T_{\eta}}
  &\le \left(\frac{2}{\xi}\right)^{1/2}\big|\sqrt{n_\eta^{k+1}}\big|_{1,2,\T_{\eta}}
  \left(\sum_{K\in\T_{\eta}}\m(K)n_K^{k+1}\right)^{1/2} \nonumber \\
  &= \left(\frac{2}{\xi}\right)^{1/2}\big|\sqrt{n_\eta^{k+1}}\big|_{1,2,\T_{\eta}}
  \|n_0\|_{L^1(\Omega)}^{1/2}. \label{w11}
\end{align}
In view of the entropy stability estimate \eqref{ent.stab}, we infer that $(n_\eta)$ is bounded in $L^2(0,T;$ $W^{1,1}(\Omega))$. Because of the discrete Sobolev inequality \cite[Theorem~4]{BCF12},
$$
  \|n_\eta^{k+1}\|_{0,2,\T_{\eta}} \le C\|n_\eta^{k+1}\|_{1,1,T_{\eta}},
$$
the family $(n_\eta)$ is bounded in $L^2(\Omega_T)$. 

In order to estimate the approximate gradient of $n_\eta$, we employ \eqref{aux2}. The last term in \eqref{aux2} is treated as follows. We multiply \eqref{fv.S} by $\Delta t \,n_K^{k+1}$, sum over $K\in\T$, and sum by parts:
\begin{align*}
  \Delta t\sum_{\substack{\sigma\in\E_{\rm int},\\ \sigma=K|L}} &
  \tau_\sigma DS_{K,\sigma}^{k+1}Dn_{K,\sigma}^{k+1}
  = -\delta\Delta t\sum_{\substack{\sigma\in\E_{\rm int},\\ \sigma=K|L}}
  \tau_\sigma |Dn_{K,\sigma}^{k+1}|^2 \\
  &\phantom{xx}{}+ \mu\Delta t\sum_{K\in\T_\eta}
  \m(K)|n_K^{k+1}|^2 
  - \Delta t\sum_{K\in\T_\eta}\m(K)S_K^{k+1}n_K^{k+1} \\
  &\le -\delta\Delta t\sum_{\substack{\sigma\in\E_{\rm int},\\ \sigma=K|L}}
  \tau_\sigma |Dn_{K,\sigma}^{k+1}|^2
  + C\big(\|n_\eta^{k+1}\|_{0,2,\T_{\eta}}^2 + \|S_\eta^{k+1}\|_{0,2,\T_{\eta}}^2\big).
\end{align*}
Inserting this estimate into \eqref{aux2}, we infer that
\begin{align*}
  E^{k+1}-E^k &+ \frac{\Delta t}{2}
  \sum_{\substack{\sigma\in\E_{\rm int},\\ \sigma=K|L}}\tau_\sigma
  \big|D\big(\sqrt{n^{k+1}}\big)_{K,\sigma}\big|^2 
  + \delta\Delta t\sum_{\substack{\sigma\in\E_{\rm int},\\ \sigma=K|L}}
  \tau_\sigma |Dn_{K,\sigma}^{k+1}|^2 \\
  &\le C\big(\|n_\eta^{k+1}\|_{0,2,\T_{\eta}}^2 + \|S_\eta^{k+1}\|_{0,2,\T_{\eta}}^2\big).
\end{align*}
Summing this inequality over $k=0,\ldots,M_T-1$ and observing that the right-hand side is uniformly bounded, we conclude that $(\na^\eta n_\eta)$ is bounded in $L^2(\Omega_T)$, which finishes the proof.
\end{proof}


\section{Convergence of the finite volume scheme}\label{sec.conv}

We prove Theorem \ref{thm.conv}. Consider the family $(n_\eta,S_\eta)_{\eta>0}$ of approximate solutions to \eqref{fv.n0}-\eqref{fv.S}.  In order to apply compactness results, we need to control the difference $n_\eta(\cdot,t+\tau)-n_\eta(\cdot,t)$. To this end, let $\phi\in L^\infty(0,T;H^{2+\varepsilon}(\Omega))$, where $\varepsilon>0$. We denote by $\phi_K$ the average of $\phi$ in the control volume $K$. Using scheme \eqref{fv.n} and the notation $n_\eta^{k+1}:=n_\eta(\cdot,t^{k+1})$,
\begin{align*}
  \sum_{K\in\T_\eta}\m(K)(n_K^{k+1}-n_K^{k})\phi_K
  &\le \frac{\Delta t}{2}\sum_{K\in\T_\eta}\sum_{\sigma\in\E_K}\tau_\sigma
  \big(|Dn_{K,\sigma}^{k+1}| + n_K^{k+1}|DS_{K,\sigma}^{k+1}|\big)|D\phi_{K,\sigma}| \\
  &\le \Delta t\big(|n_\eta^{k+1}|_{1,2,\T_{\eta}}|\phi|_{1,2,\T_{\eta}}
  + \|n_\eta^{k+1}\|_{0,2,\T_{\eta}}|S_\eta^{k+1}|_{1,2,\T_{\eta}}|\phi|_{1,\infty,\T_{\eta}}\big) \\
  &\le C\Delta t\big(|n_\eta^{k+1}|_{1,2,\T_{\eta}} + \|n_\eta^{k+1}\|_{0,2,\T_{\eta}}|S_\eta^{k+1}|_{1,2,\T_{\eta}}
  \big)\|\phi\|_{H^{2+\varepsilon}(\Omega)},
\end{align*}
where $C>0$ only depends on $\Omega$. Summing over $k=0,\ldots,M_T-1$ and employing H\"older's inequality, the uniform bound on $n_\eta$ from Proposition \ref{prop.bound} and on $S_\eta$ from \eqref{nas} imply the existence of a constant $C>0$, independent of $\eta$, such that
\begin{equation}\label{aux6}
  \sum_{k=0}^{M_T-1}\sum_{K\in\T_\eta}\m(K)(n_K^{k+1}-n_K^{k})\phi_K
  \le C\Delta t\|\phi\|_{L^\infty(0,T;H^{2+\varepsilon}(\Omega))}.
\end{equation}
Now, similarly as in the proof of Lemma 10.6 in \cite{EGH00}, for all $0<\tau<\Delta t$,
\begin{align*}
  \int_0^{T-\tau}\int_\Omega & \big(n_\eta(x,t+\tau)-n_\eta(x,t)\big)\phi(x,t)dx\,dt \\
  &\le \sum_{k=0}^{M_T-1}\int_0^{T-\tau}\chi_k(t,t+\tau)dt
  \sum_{K\in\T_{\eta}}\m(K)(n_K^{k+1}-n_K^k)\phi_K,
\end{align*}
where 
$$
  \chi_k(t,t+\tau) = \left\{\begin{array}{ll}
  1 &\quad\mbox{if }k\Delta t\in(t,t+\tau], \\
  0 & \quad\mbox{if }k\Delta t\not\in(t,t+\tau].
  \end{array}\right.
$$
Inserting \eqref{aux6} into the above inequality and observing that
$$
  \int_0^{T-\tau}\chi_k(t,t+\tau)dt \le \tau\le \Delta t,
$$
we infer that
$$
  \int_0^{T-\tau}\int_\Omega \big(n_\eta(x,t+\tau)-n_\eta(x,t)\big)\phi(x,t)dx\,dt
  \le C\|\phi\|_{L^\infty(0,T;H^{2+\varepsilon}(\Omega))}.
$$
This gives a uniform estimate for the time translations of $n_\eta$ in $L^1(0,T;(H^{2+\varepsilon}(\Omega))')$. Since the embedding $H^1(\Omega)\hookrightarrow L^p(\Omega)$ is compact for all $1\le p<\infty$ in two space dimensions, we conclude from the discrete Aubin lemma \cite{DrJu12} that there exists a subsequence of $(n_\eta)$, not relabeled, such that, as $\eta\to 0$,
$$
  n_\eta\to n\quad\mbox{strongly in }L^2(0,T;L^p(\Omega)), \ p<\infty.
$$
Furthermore, since $(\na^\eta n_\eta)$ is bounded in $L^2(\Omega_T)$, there exists $y\in L^2(\Omega_T)$ such that
$$
  \na^\eta n_\eta\rightharpoonup y\quad\mbox{weakly in }L^2(\Omega_T).
$$
It is shown in the proof of Lemma 4.4 in \cite{CLP03} that $y=\na n$ in the sense of distributions. The bound of $(S_\eta)$ in $L^2(0,T;H^1(\Omega))$ implies the existence of a subsequence, which is not relabeled, such that
$$
  S_\eta\rightharpoonup S, \quad \na^\eta S_\eta\rightharpoonup z\quad\mbox{weakly in }
  L^2(\Omega_T).
$$
Again, it follows that $z=\na S$ in the sense of distributions.

The limit $\eta\to 0$ in the scheme \eqref{fv.n0}-\eqref{fv.S} is performed exactly as in the proofs of Propositions 4.2 and 4.3 in \cite{Fil06}, using the above convergence results and the fact that $(n_\eta\na^\eta S_\eta)$ converges weakly to $n\na S$ in $L^1(\Omega_T)$. Compared to \cite{Fil06}, we have to pass to the limit also in the additional cross-diffusion term which does not give any difficulty since this term is linear in $n_\eta$. This shows that $(n,S)$ solves the weak formulation \eqref{weak.n}-\eqref{weak.S}, finishing the proof.


\section{Proof of the discrete logarithmic Sobolev inequality}\label{sec.dlsi}

The proof follows \cite[Lemma 2.1]{DeFe07}. Set 
$$
  v = \frac{\sqrt{m}(u-\bar u)}{\|u-\bar u\|_{0,2,\T}}\in X(\T), 
$$
where $m=\m(\Omega)$ and $\bar u=m^{-1}\int_\Omega udx$.
Then $\int_\Omega vdx=0$ and $m^{-1}\int_\Omega v^2 dx=1$. Using Jensen's inequality for the (probability) measure $m^{-1}v^2 dx$, we find that for $q>2$,
\begin{align*}
  \frac{1}{m}\int_\Omega v^2 \log(v^2)dx
  &= \frac{2}{q-2}\int_\Omega\log(v^{q-2})(m^{-1}v^2 dx)
  \le \frac{2}{q-2}\log\left(\int_\Omega v^{q-2}(m^{-1}v^2dx)\right) \\
  &= \frac{q}{q-2}\log(m^{-1}\|v\|_{0,q,\T}^2) 
  \le \frac{q}{q-2}(m^{-1}\|v\|_{0,q,\T}^2 - 1),
\end{align*}
because of $\log x\le x-1$ for $x>0$. With the definition of $v$, this inequality becomes
$$
  \int_\Omega(u-\bar u)^2\log\frac{(u-\bar u)^2}{m^{-1}\|u-\bar u\|_{0,2,\T}^2}
  dx \le \frac{q}{q-2}\big(\|u-\bar u\|_{0,q,\T}^2 - \|u-\bar u\|_{0,2,\T}^2\big).
$$
By the discrete Sobolev inequality \eqref{dsi}, we infer that for $2<q \leq 2d/(d-2)$ (and $2<q< \infty$ if $d \le 2$)
\begin{align*}
  \int_\Omega(u-\bar u)^2\log\frac{(u-\bar u)^2}{m^{-1}\|u-\bar u\|_{0,2,\T}^2}dx 
  &\le \frac{q}{q-2}\,\frac{C_S(q)^2}{\xi}|u|_{1,2,\T}^2 \\
  &\phantom{xx}{}
  + \frac{q}{q-2}\left(\frac{C_S(q)^2}{\xi}-1\right)\|u-\bar u\|_{0,2,\T}^2.
\end{align*}
Inequality (4.2.19) in \cite{GuZe03} (adapted to domains with general measure) shows that
$$
  \int_\Omega u^2\log\frac{u^2}{m^{-1}\|u\|_{0,2,\T}^2}dx
  \le \int_\Omega(u-\bar u)^2\log\frac{(u-\bar u)^2}{m^{-1}\|u-\bar u\|_{0,2,\T}^2}dx 
  + 2\|u-\bar u\|_{0,2,\T}^2.
$$
Hence, with the discrete Poincar\'e inequality \eqref{dpi},
\begin{align*}
  \int_\Omega u^2\log\frac{u^2}{m^{-1}\|u\|_{0,2,\T}}dx
  &\le \frac{q}{q-2}\,\frac{C_S(q)^2}{\xi}|u|_{1,2,\T}^2 
  + \frac{1}{q-2}\left(\frac{q C_S(q)^2}{\xi}+(q-4)\right)\|u-\bar u\|_{0,2,\T}^2 \\
  &\le \frac{q}{(q-2)\xi}\left(C_S(q)^2 + \frac{C_S(q)^2C_P(2)^2}{\xi} 
  + \frac{q-4}{q}C_P(2)^2\right)|u|_{1,2,\T}^2,
\end{align*}
and Proposition \ref{prop.dlsi} follows.


\section{Long-time behavior}\label{sec.long}

In this section, we prove Theorem \ref{thm.long}. Similarly as in the proof of Proposition \ref{prop.ent} (see \eqref{aux2} and \eqref{aux3}), we have
\begin{align}
  E[n^{k+1}|n^*]&-E[n^k|n^*]
  = \sum_{K\in\T}\m(K)(H(n_K^{k+1})-H(n_K^k)) \nonumber \\
  &\le -\Delta t\sum_{\substack{\sigma\in\E_{\rm int},\\ \sigma=K|L}}
  \tau_\sigma\big|D\big(\sqrt{n^{k+1}}\big)_{K,\sigma}\big|^2
  + \Delta t\sum_{\substack{\sigma\in\E_{\rm int},\\ \sigma=K|L}}
  \tau_\sigma DS_{K,\sigma}^{k+1}Dn_{K,\sigma}^{k+1}. \label{aux7}
\end{align}
In view of the identity $S^*=\mu n^*$, we can formulate the scheme \eqref{fv.S} for all $K\in\T$ as
$$
  0 = \sum_{\sigma\in\E_K}\tau_\sigma D(S^{k+1}-S^*)_{K,\sigma}
  + \delta\sum_{\sigma\in\E_K}\tau_\sigma Dn^{k+1}_{K,\sigma} 
  + \m(K)\big(\mu(n_K^{k+1}-n^*) - (S_K^{k+1}-S^*)\big).
$$
Multiplying this equation by $(S_K^{k+1}-S^*)/\delta$ and summing over $K\in\T$ gives
\begin{align*}
  0 &= 
  -\frac{1}{\delta}\sum_{\substack{\sigma\in\E_{\rm int},\\ \sigma=K|L}}
  \tau_\sigma \big|D(S^{k+1}-S^*)_{K,\sigma}\big|^2
  - \sum_{\substack{\sigma\in\E_{\rm int},\\ \sigma=K|L}}
  \tau_\sigma DS_{K,\sigma}^{k+1}Dn_{K,\sigma}^{k+1} \\
  &\phantom{xx}{}+ \frac{\mu}{\delta}\sum_{K\in\T}\m(K)(n_K^{k+1}-n^*)(S_K^{k+1}-S^*)
  - \frac{1}{\delta}\sum_{K\in\T}\m(K)(S_K^{k+1}-S^*)^2.
\end{align*}
Replacing the last term in \eqref{aux7} by the above equation and using the Cauchy-Schwarz and Young inequalities, it follows that
\begin{align*}
  E[n^{k+1}|n^*]-E[n^k|n^*] &+ \Delta t\big|\sqrt{n^{k+1}}\big|_{1,2,\T}^2
  + \frac{\Delta t}{\delta}\|S^{k+1}-S^*\|_{1,2,\T}^2 \\
  &= \frac{\mu\Delta t}{\delta}\sum_{K\in\T}\m(K)(n_K^{k+1}-n^*)(S_K^{k+1}-S^*) \\
  &\le \frac{\mu^2\Delta t}{2\delta}\|n^{k+1}-n^*\|_{0,2,\T}^2
  + \frac{\Delta t}{2\delta}\|S^{k+1}-S^*\|_{0,2,\T}^2.
\end{align*}
The second term on the right-hand side can be absorbed by the corresponding expression on the left-hand side. For the first term, we employ the continuous embedding of $BV(\Omega)$ into $L^{2}(\Omega)$ in dimension 2 and the definition of the seminorm $|\cdot |_{1,1,\T}$ \cite[Theorem 2]{BCF12}:
$$
  \|n^{k+1}-n^*\|_{0,2,\T}  \le C(\Omega)|n^{k+1}|_{1,1,\T}.
$$
Then, using inequality \eqref{w11}, we can estimate:
\begin{equation}\label{aux8}
  \|n^{k+1}-n^*\|_{0,2,\T}^2 \le \frac{2}{\xi}C(\Omega)^2\|n_0\|_{L^1(\Omega)}
  \big|\sqrt{n^{k+1}}\big|_{1,2,\T}^2.
\end{equation}
Setting $C^*=\mu^2 C(\Omega)^2\|n_0\|_{L^1(\Omega)}/(\delta\xi)$, this yields
$$
  E[n^{k+1}|n^*]-E[n^k|n^*] 
  + \Delta t(1-C^*)\big|\sqrt{n^{k+1}}\big|_{1,2,\T}^2
  + \frac{\Delta t}{2\delta}\|S^{k+1}-S^*\|_{1,2,\T}^2 \le 0.
$$

To proceed, we assume that $C^*<1$. With the discrete logarithmic Sobolev inequality
(Proposition \ref{prop.dlsi}),
$$
  E[n^{k+1}|n^*] \le C_L|\sqrt{n^{k+1}}|_{1,2,\T}^2,
$$
we infer that 
\begin{equation}\label{aux9}
  E[n^{k+1}|n^*]\left(1+\frac{1-C^*}{C_L}\Delta t\right)-E[n^k|n^*] 
  + \frac{\Delta t}{2\delta}\|S^{k+1}-S^*\|_{1,2,\T}^2 \le 0,
\end{equation}
and hence,
\begin{equation}\label{decay.e}
  E[n^k|n^*] \le \left(1+\frac{1-C^*}{C_L}\Delta t\right)^{-k} E[n^0|n^*].
\end{equation}
Then, by the Csisz\'ar-Kullback inequality \cite[Prop.~3.1]{CCD02}
(this result is valid in bounded domains too),
$$
  \|n^k-n^*\|_{0,1,\T}^2 \le 4\|n_0\|_{L^1(\Omega)}E[n^k|n^*] 
  \le 4\|n_0\|_{L^1(\Omega)}
  \left(1+\frac{1-C^*}{C_L}\Delta t\right)^{-k} E[n^0|n^*].
$$
Going back to \eqref{aux9}, we find that
$$
  \|S^{k+1}-S^*\|_{1,2,\T}^2
  \le \frac{2\delta}{\Delta t}E[n^k|n^*] 
  \le \frac{2\delta}{\Delta t}\left(1+\frac{1-C^*}{C_L}\Delta t\right)^{-k} E[n^0|n^*],
$$
which concludes the proof.



\section{Numerical experiments}\label{sec.num}

In this section, we investigate the numerical convergence rates and give some examples illuminating the long-time behavior of the finite volume solutions to nonhomogeneous steady states.

\subsection{Numerical convergence rates}

We compute first the spatial convergence rate of the numerical scheme. We consider the system \eqref{ks} on the square $\Omega=(-\frac12,\frac12)^2$. The time step is chosen to be $\Delta t=10^{-8}$, and the final time is  $t=10^{-4}$. The initial data is the Gaussian
$$
  n_0(x,y) = \frac{M}{2\pi \theta}\exp\left(-\frac{(x-x_0)^2+(y-y_0)^2}{2\theta}\right),
$$
where $\theta=10^{-2}$, $M=\|n_0\|_{L^1(\Omega)}=6\pi$, and $x_0=y_0=0.1$. The model parameters are $\delta=10^{-3}$ and $\mu=1$. We compute the numerical solution on a sequence of square meshes. The coarsest mesh is composed of $4\times 4$ squares. The sequence of meshes is obtained by dividing successively the size of the squares by 4. Then, the finest grid is made of $256\times 256$ squares. The $L^p$ error at time $t$ is given by 
$$
  e_{\Delta x} = \|n_{\Delta x}(\cdot,t) - n_{\rm ex}(\cdot,t)\|_{L^p(\Omega)},
$$
where $n_{\Delta x}$ represents the approximation of the cell density computed from a mesh of size $\Delta x$ and $n_{\rm ex}$ is the ``exact'' solution computed from a mesh with $256\times 256$ squares (and with $\Delta t=10^{-8}$). The numerical scheme is said to be of order $m$ if for all sufficiently small $\Delta x>0$, it holds that $e_{\Delta x}\le C(\Delta x)^m$ for some constant $C>0$. Figure \ref{fig.cr} shows that the convergence rates in the $L^1$, $L^2$, and $L^\infty$ norms are around one. As expected, the scheme is of first order.

\begin{figure}[ht]
\centering
\includegraphics[scale=0.55]{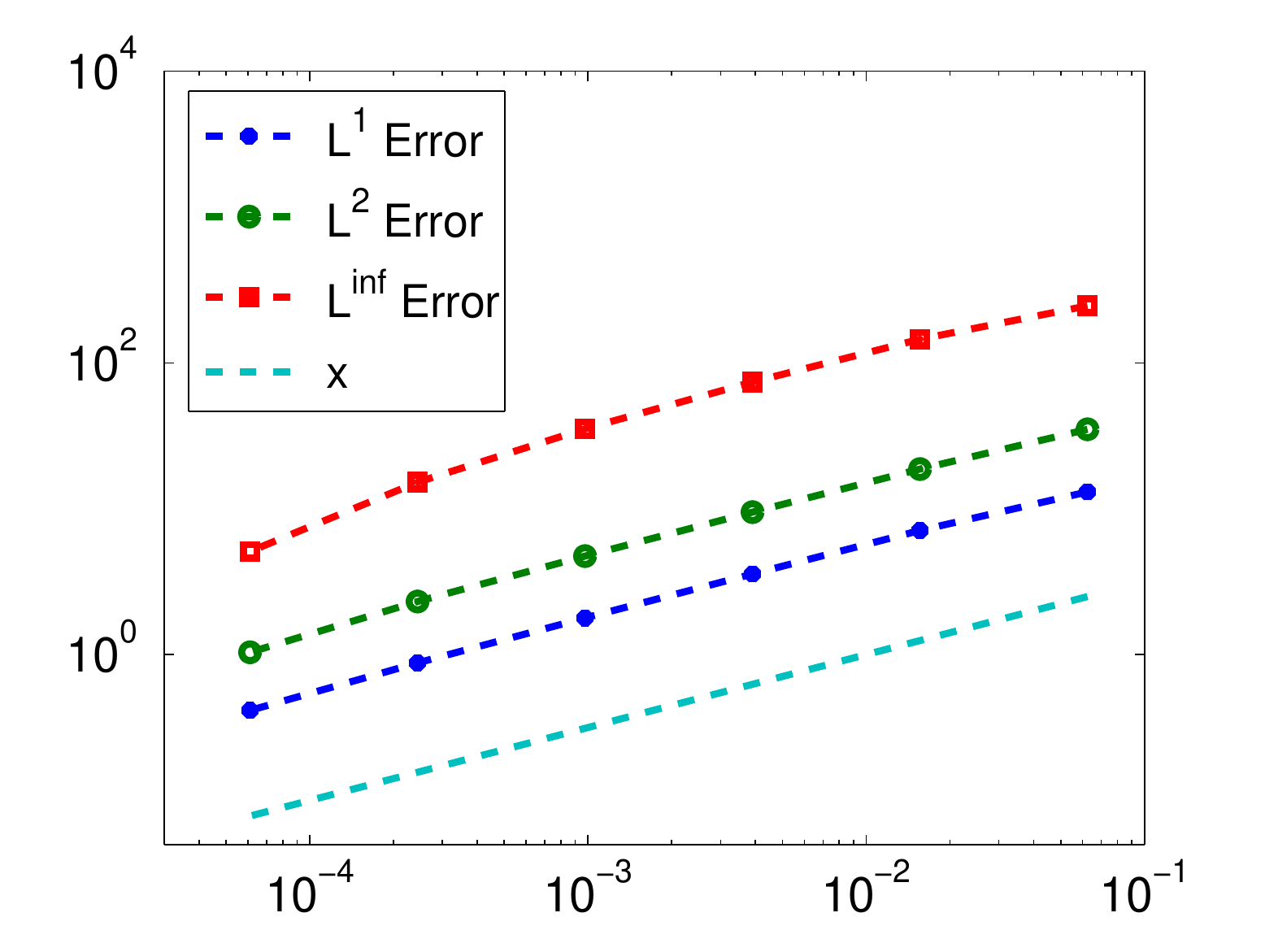}
\caption{Spatial convergence orders in the $L^{1}$, $L^{2}$, and $L^{\infty}$ norm.}
\label{fig.cr}
\end{figure}


\subsection{Decay rates}

According to Theorem \ref{thm.long}, the solution to the Keller-Segel system converges to the homogeneous steady state if $\mu$ or $1/\delta$ are sufficiently small. We will verify this property experimentally. To this end, let $\Omega=(-\frac12,\frac12)^2$ and
$$
  n_0(x,y) = \frac{M}{2\pi\theta}\exp\left(-\frac{x^2+y^2}{2\theta}\right),
$$
where $\theta=10^{-2}$ and $M=5\pi$. We compute the approximate solution on a $32\times 32$ Cartesian grid, and
we choose $\Delta t=2\cdot 10^{-4}$. In Figure \ref{fig.ent1}, we depict the temporal evolution of the relative entropy $E^*(t^k)=E[n^k|n^*]$ in semi-logarithmic scale. In all cases shown, the convergence seems to be of exponential rate. The rate becomes larger for larger values of $\delta$ or smaller values of $\mu$ which is in agreement
with estimate \eqref{decay.e}. In fact, the constant $C^*$ is proportional to
$\mu^2/\delta$ (see Theorem \ref{thm.long}) and the rate improves if
$\mu^2/\delta$ is smaller.

\begin{figure}[ht]
\centering
\subfigure[$\mu = 1$.]{\includegraphics[width=75mm]{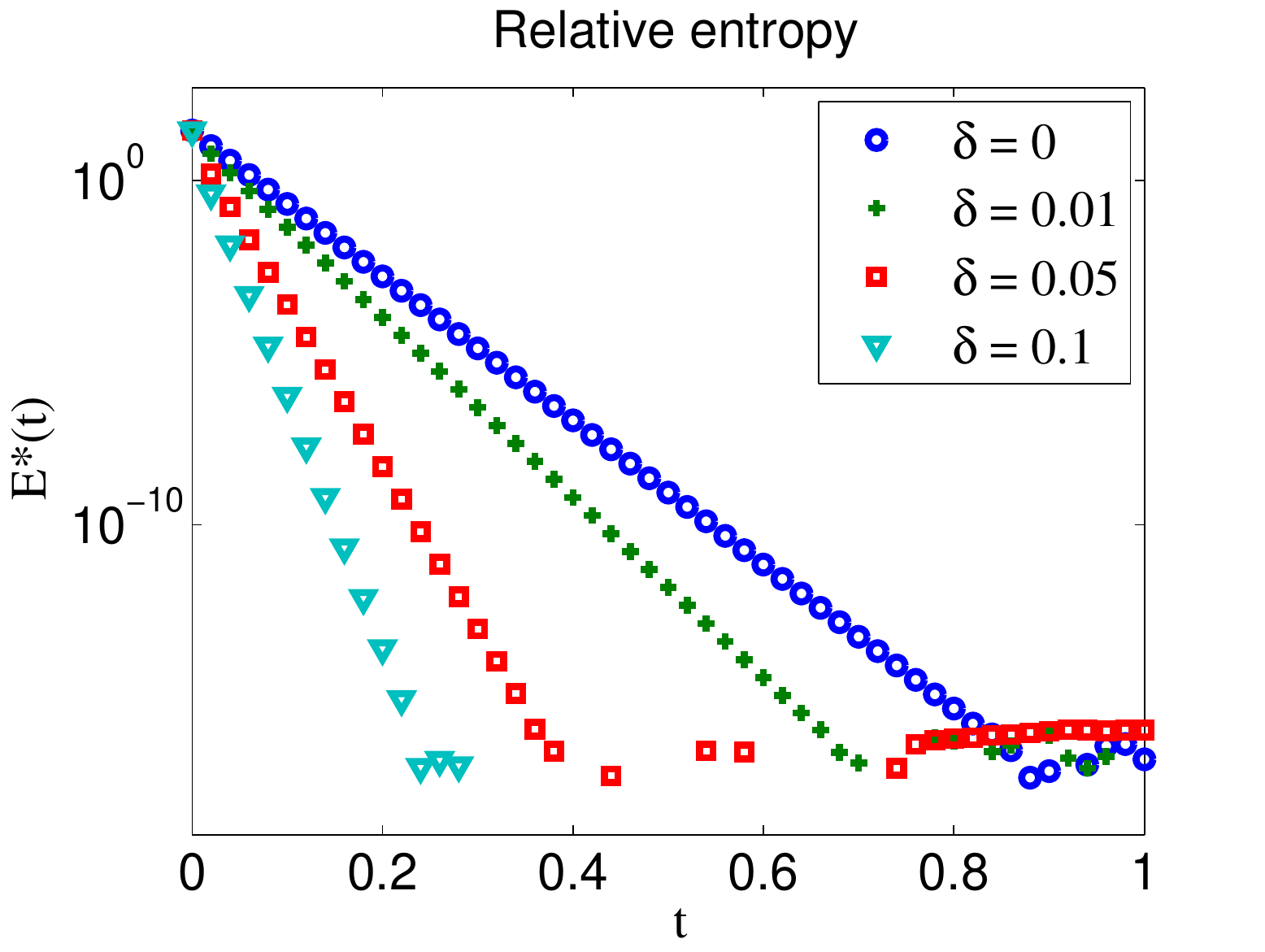}\label{fig1}}
\subfigure[$\delta = 10^{-3}$.]{\includegraphics[width=75mm]{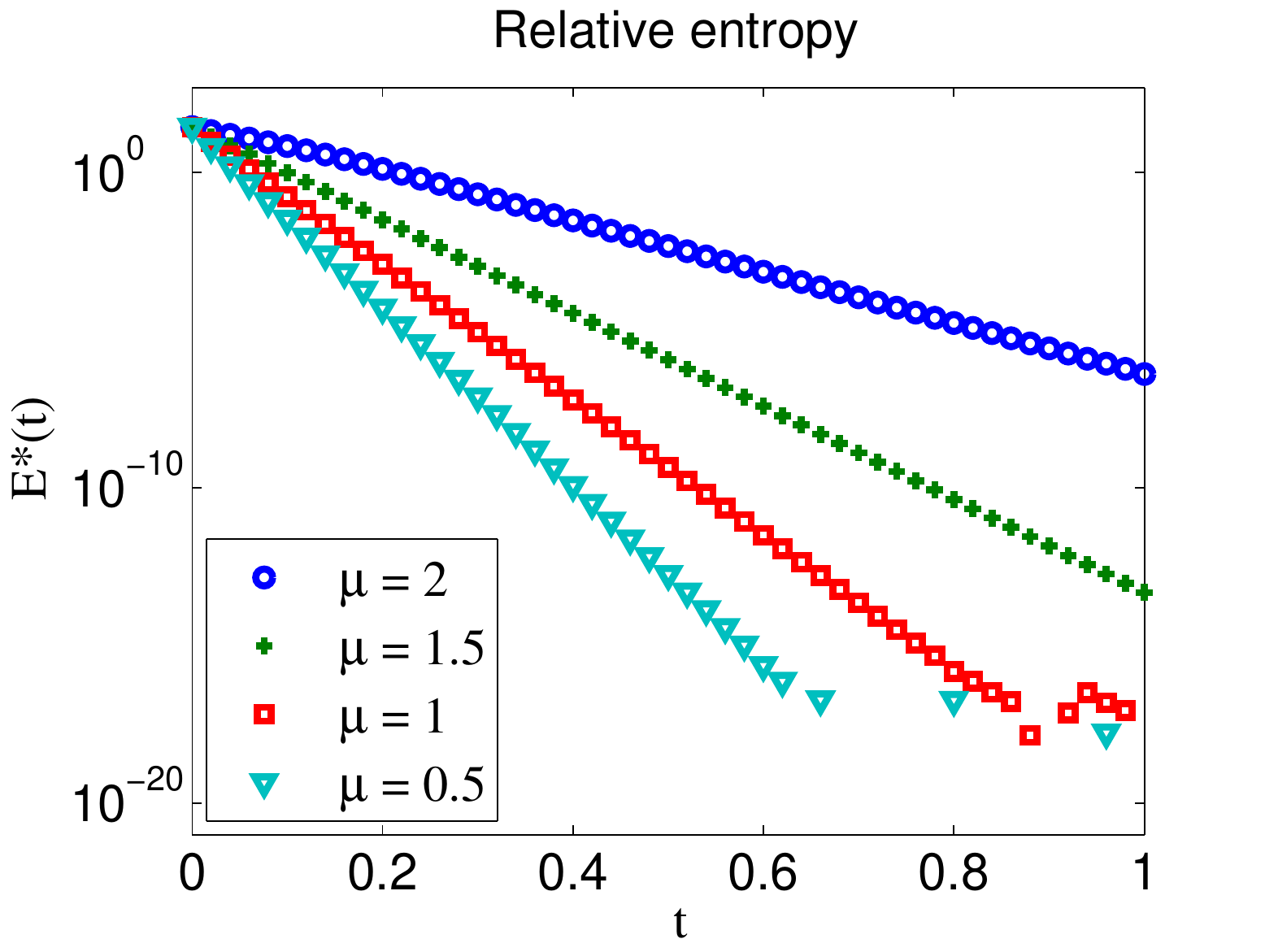}\label{fig2}}
\caption{Relative entropy $E[n^k|n^*]$ versus time $t^k$ in semi-logarithmic scale
for various values of $\delta$ and $\mu$.}
\label{fig.ent1}
\end{figure}

As a numerical check, we computed the evolution of the relative entropies for different grid sizes $N$ and different time step sizes $\Delta t$. Figure \ref{fig.ent2} shows that the decay rate does not depend on the time step or the mesh considered.

\begin{figure}[ht]
\centering
\subfigure[$\Delta t=2.10^{-5}$.]{\includegraphics[width=75mm]{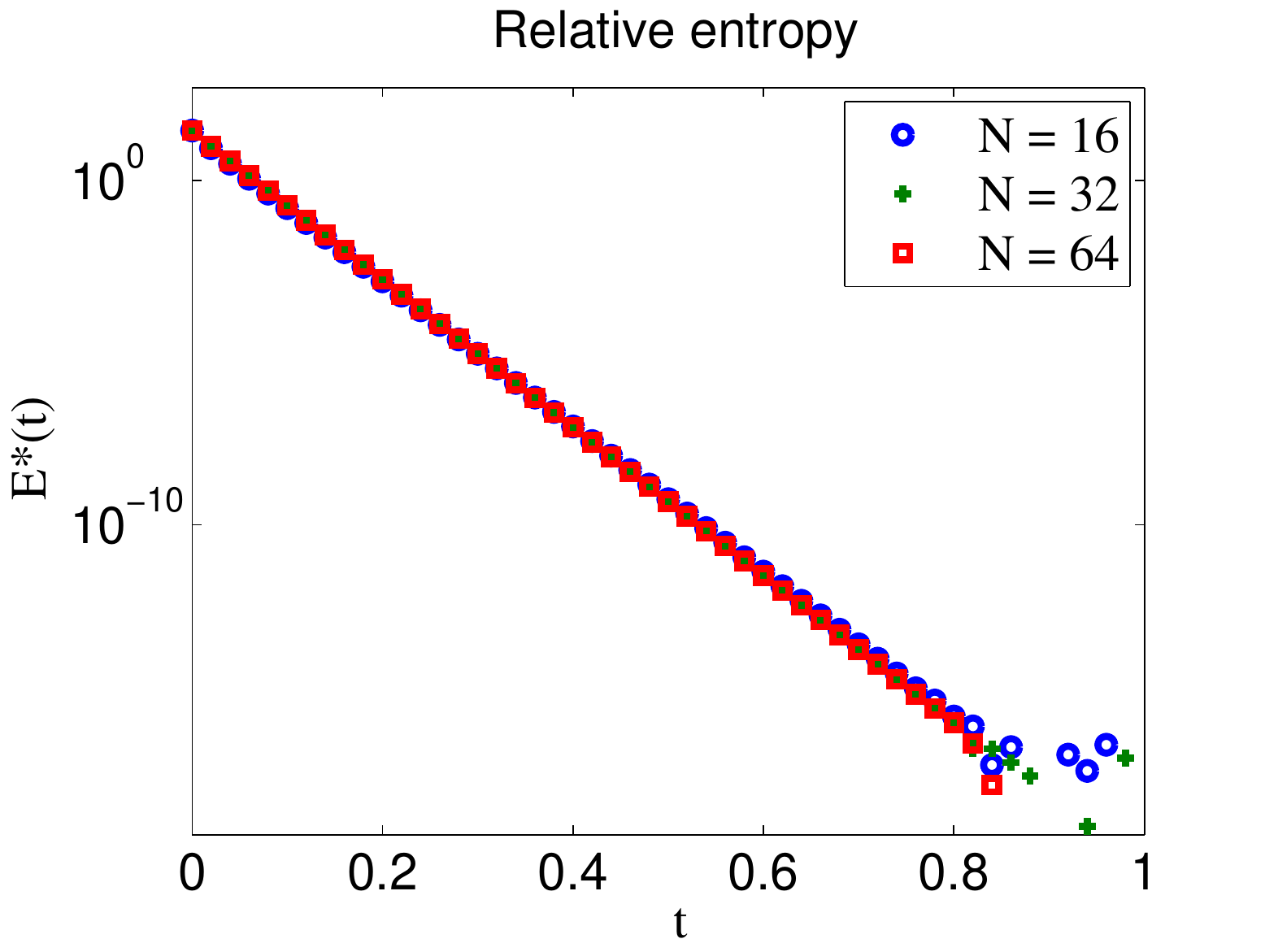}}
\subfigure[$N=16$.]{\includegraphics[width=75mm]{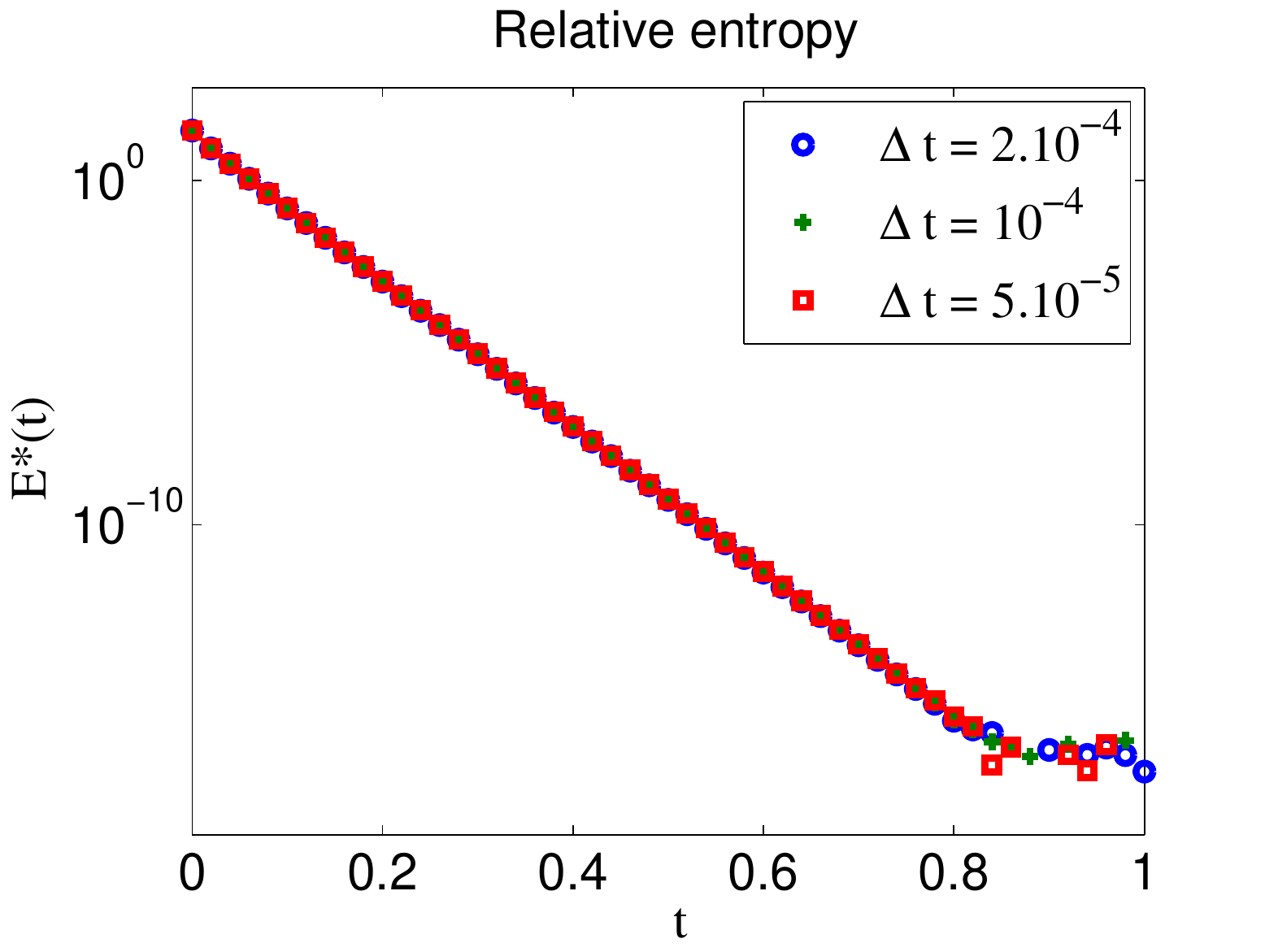}}
\caption{Relative entropy $E[n^k|n^*]$ versus time $t^k$ in semi-logarithmic scale for various mesh and time step sizes.}
\label{fig.ent2}
\end{figure}


\subsection{Nonsymmetric initial data on a square}

In this subsection, we explore the behavior of the solutions to \eqref{ks} for different values of $\delta$. We choose $\Omega=(-\frac12,\frac12)^2$ with a $64\times 64$ Cartesian grid, $\mu=1$, and $\Delta t=2\cdot 10^{-5}$. We consider two nonsymmetric initial functions with mass $6\pi$:
\begin{align}
  n_{0,1}(x,y) &= \frac{6\pi}{2\pi \theta}\exp\left(-\frac{(x-x_0)^2+(y-y_0)^2}{2\theta}
  \right), \label{n01} \\
  n_{0,2}(x,y) &= \frac{4\pi}{2\pi \theta}\exp\left(-\frac{(x-x_0)^2+(y-y_0)^2}{2\theta}
  \right) + \frac{2\pi}{2\pi \theta}\exp\left(-\frac{(x-x_1)^2+(y-y_1)^2}{2\theta}
  \right), \label{n02}
\end{align}
where $\theta=10^{-2}$, $x_0=y_0=0.1$, and $x_1=y_1=-0.2$ (see Figure \ref{fig.init}).

\begin{figure}[ht]
\centering
\subfigure[Initial datum $n_{0,1}$.]{\includegraphics[width=70mm]{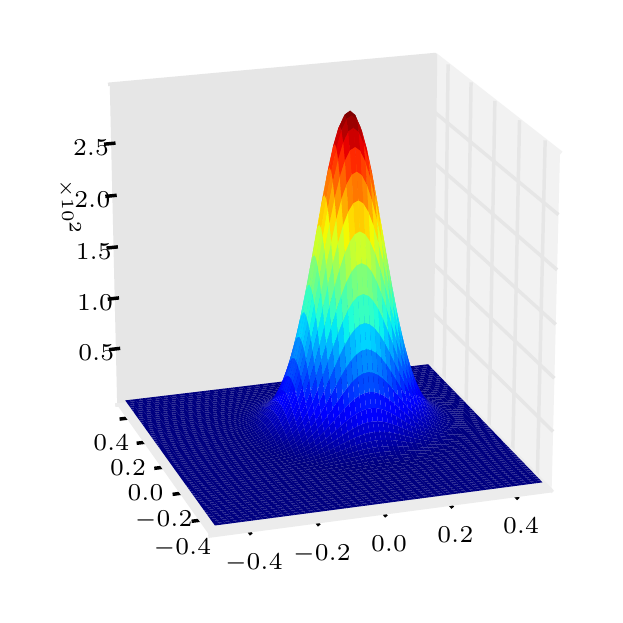}}
\subfigure[Initial datum $n_{0,2}$.]{\includegraphics[width=70mm]{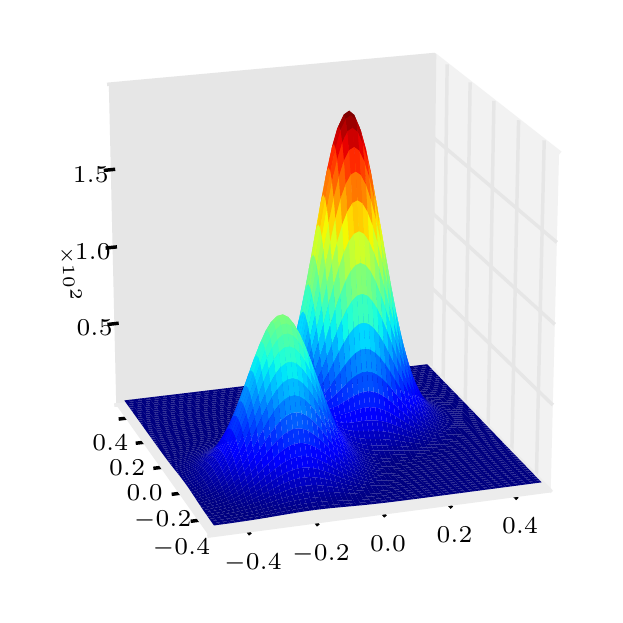}}
\caption{Initial cell densities.}
\label{fig.init}
\end{figure}

We consider first the case $\delta=0$, which corresponds to the classical parabolic-elliptic Keller-Segel system. In this case, our finite volume scheme coincides with that of \cite{Fil06}. We recall that solutions to the classical parabolic-elliptic model blow up in finite time if the initial mass satisfies $M>4\pi$ \cite{Nag01} (in the non-radial case). The numerical results  at a time just before the numerical blow-up are presented in Figure \ref{fig.nonsymm.0}. We observe the blow-up of the cell density in finite time, and the blow-up occurs at the boundary, as expected. More precisely, it occurs at that corner which is closest to the global maximum of the initial datum.

\begin{figure}[ht]
\centering
\subfigure[Initial datum $n_{0,1}$, $t=1$.]{\includegraphics[width=70mm]{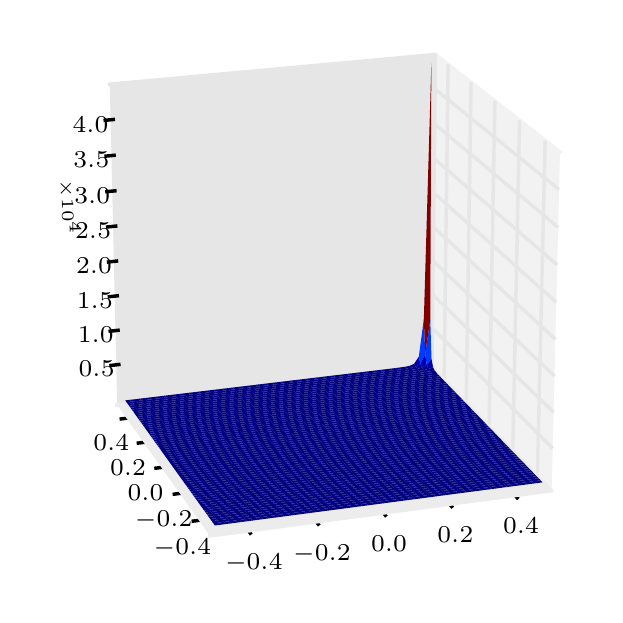}}
\subfigure[Initial datum $n_{0,2}$, $t=0.6$.]{\includegraphics[width=70mm]{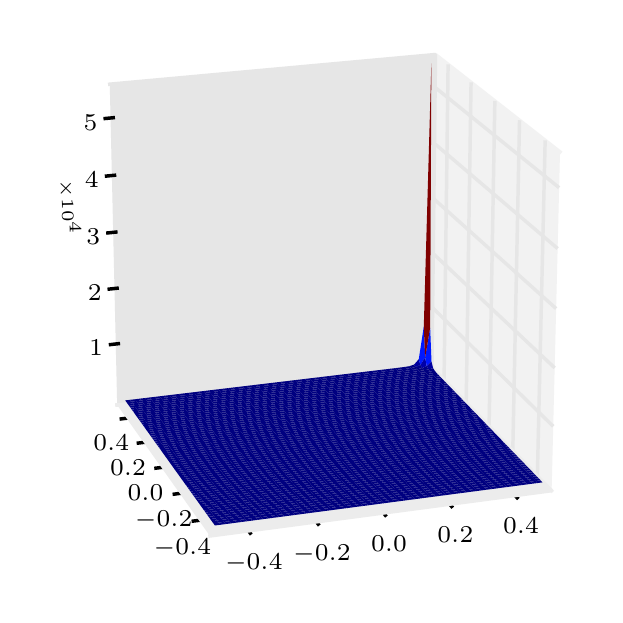}}
\caption{Cell density computed from nonsymmetric initial data with $M=6\pi$ and $\delta=0$.}
\label{fig.nonsymm.0}
\end{figure}

Next, we choose $\delta=10^{-3}$ and $\delta=10^{-2}$. According to Theorem \ref{thm.ex}, the numerical solution exists for all time. This behavior is confirmed in Figure \ref{fig.nonsymm.23}, where we show the cell density at time $t=5$. At this time, the solution is very close to the steady state which is nonhomogeneous. We observe a smoothing effect of the cross-diffusion parameter $\delta$; the cell density maximum decreases with increasing values of $\delta$.

\begin{figure}[ht]
\centering
\subfigure[Initial datum $n_{0,1}$, $\delta=10^{-3}$.]{\includegraphics[width=70mm]{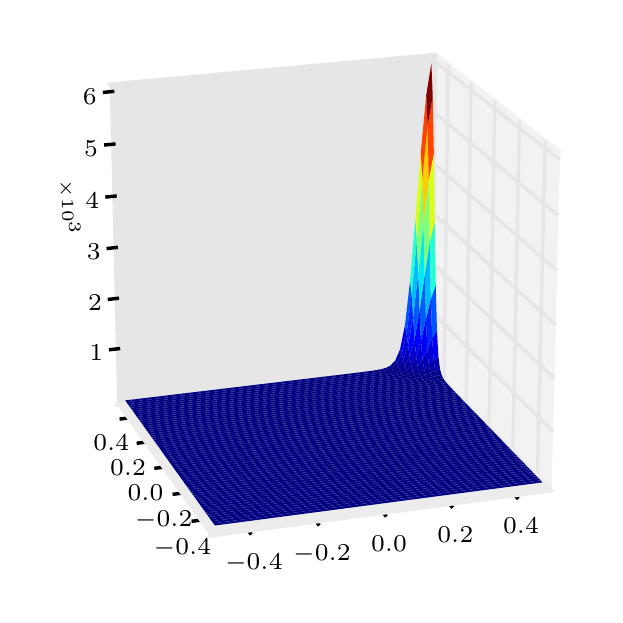}}
\subfigure[Initial datum $n_{0,2}$, $\delta=10^{-3}$.]{\includegraphics[width=70mm]{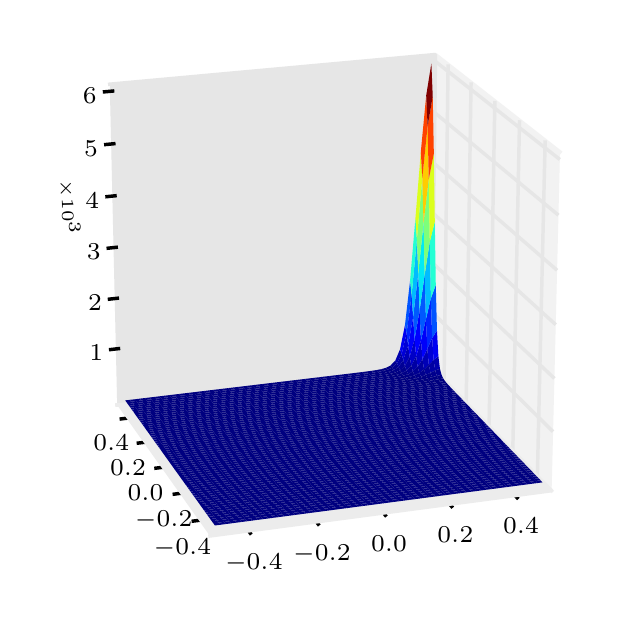}}
\subfigure[Initial datum $n_{0,1}$, $\delta=10^{-2}$.]{\includegraphics[width=70mm]{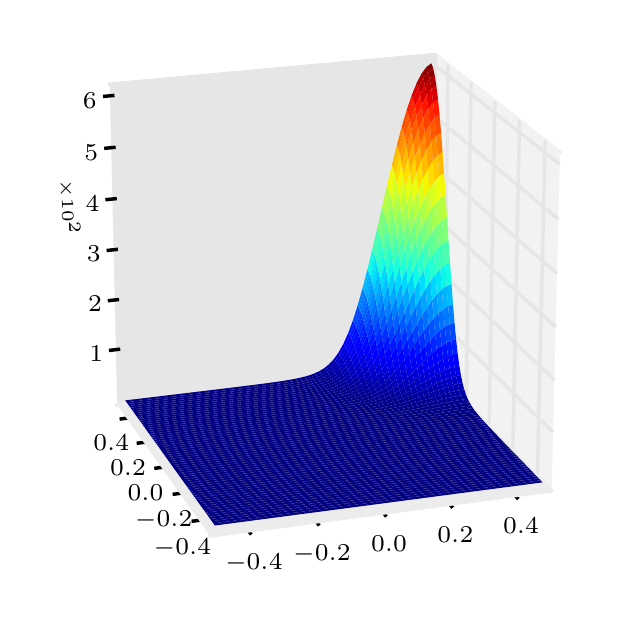}}
\subfigure[Initial datum $n_{0,2}$, $\delta=10^{-2}$.]{\includegraphics[width=70mm]{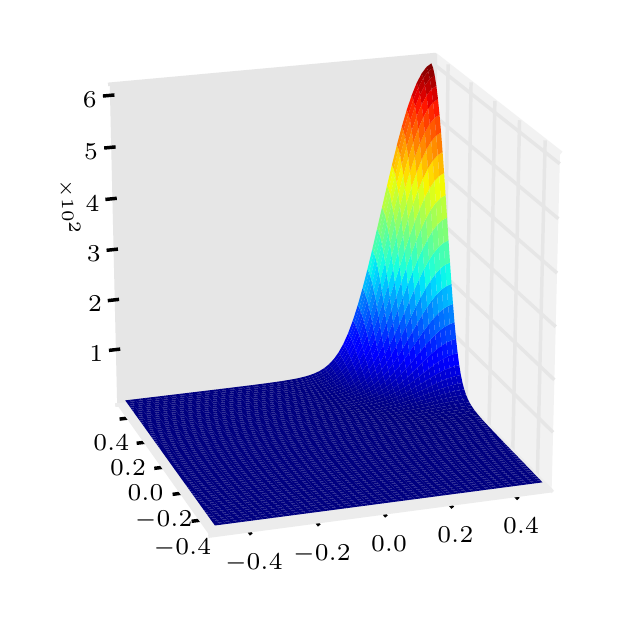}}
\caption{Cell density computed at $t=5$ from nonsymmetric initial data with $M=6\pi$ for different values of $\delta$.}
\label{fig.nonsymm.23}
\end{figure}


\subsection{Symmetric initial data on a square}

We consider, as in the previous subsection, the domain $\Omega=(-\frac12,\frac12)^2$ with a $64\times 64$ Cartesian grid, $\mu=1$, and $\Delta t=10^{-5}$. Here, we consider the radially symmetric initial datum
\begin{equation}\label{n03}
  n_{0,3}(x,y) = \frac{M}{2\pi\theta}\exp\left(-\frac{x^2+y^2}{2\theta}\right)
\end{equation}
with $M=20\pi$ and $\theta=10^{-2}$. Since $M>8\pi$ and the initial datum is radially symmetric, we expect that the solution to the classical Keller-Segel model ($\delta=0$) blows up in finite time \cite{Nag95,Per05}. Figure \ref{fig.symm.0} shows that this is indeed the case, and blow-up occurs in the center of the domain. 

\begin{figure}[ht]
\centering
\includegraphics[width=70mm]{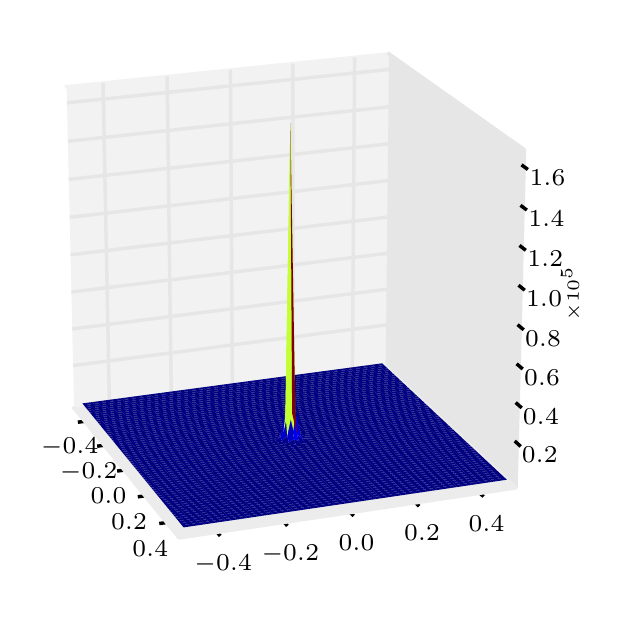}
\caption{Cell density at time $t=0.05$ computed from to the radially symmetric initial datum $n_{0,3}$ with $M=20 \pi$ and $\delta=0$.}
\label{fig.symm.0}
\end{figure}

In contrast to the classical Keller-Segel system, when taking $\delta=10^{-3}$, the cell density peak moves to a corner of the domain and converges to a nonhomogeneous steady state (see Figure \ref{fig.symm.3}). The time evolution of the $L^\infty$ norm of the cell density shows an interesting behavior (see Figure \ref{fig.symm.Linfty}). We observe two distinct levels. The first one is reached almost instantaneously. The $L^\infty$ norm stays almost constant and the cell density seems to stabilize at an intermediate symmetric state (Figure \ref{fig.symm.3}a). After some time, the $L^\infty$ norm increases sharply and the cell density peak moves to the boundary (Figure \ref{fig.symm.3}b). Then the solution stabilizes again (Figure \ref{fig.symm.3}c). We note that we obtain the same steady state when using a Gaussian centered at $(-10^{-3},-10^{-3})$.

\begin{figure}[ht]
\centering
\subfigure[$t=0.6$.]{\includegraphics[width=53mm]{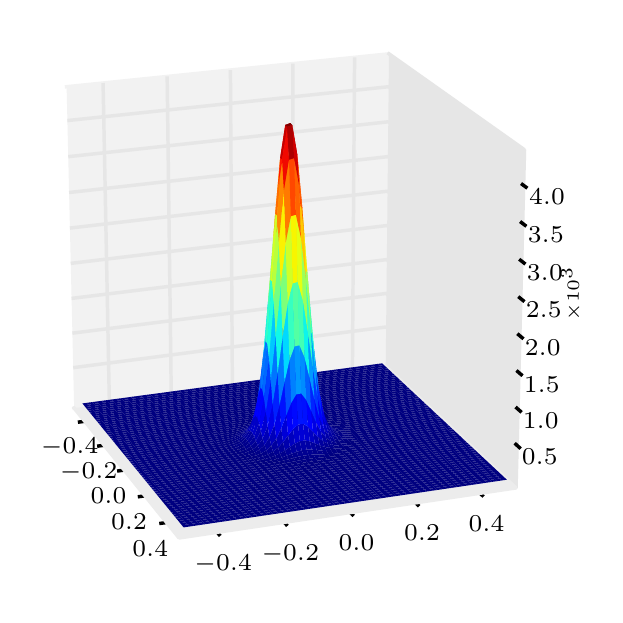}\label{subfigure7_1}}
\subfigure[$t=0.73$.]{\includegraphics[width=53mm]{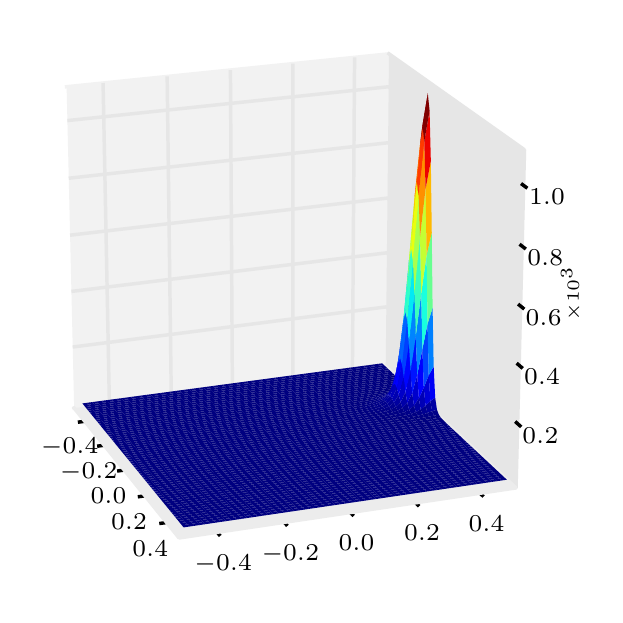}\label{subfigure7_2}}
\subfigure[$t=5$.]{\includegraphics[width=53mm]{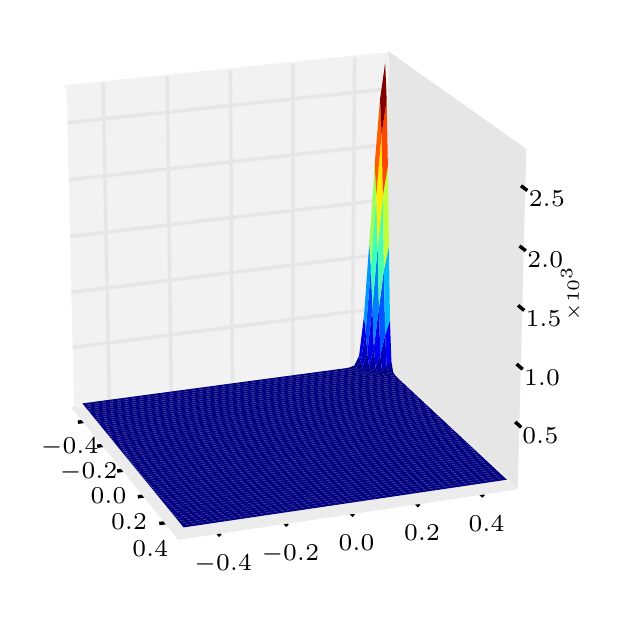}\label{subfigure7_3}}
\caption{Cell density computed from the radially symmetric initial datum $n_{0,3}$ with $M=20\pi$ and $\delta=10^{-3}$.}
\label{fig.symm.3}
\end{figure}

\begin{figure}[ht]
\centering
\includegraphics[width=70mm]{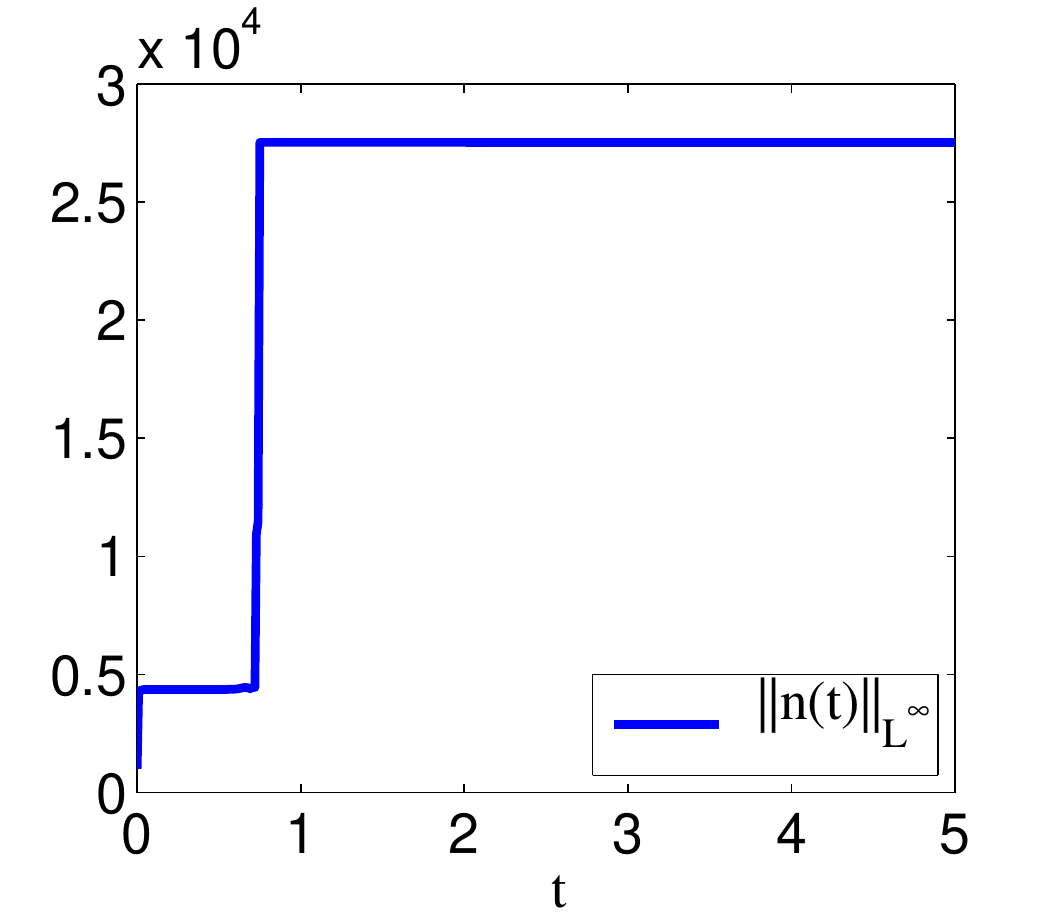}
\caption{Time evolution of $\|n^k\|_{L^\infty(\Omega)}$ computed from the radially symmetric initial datum $n_{0,3}$ with $M=20\pi$ and $\delta=10^{-3}$.}
\label{fig.symm.Linfty}
\end{figure}


\subsection{Nonsymmetric initial data on a rectangle}

We consider the domain $\Omega=(-1,1)$ $\times(-\frac12,\frac12)$ and compute the approximate solutions on a $128\times 64$ Cartesian grid with $\Delta t=5\cdot 10^{-5}$. The secretion rate is again $\mu=1$, and we choose the initial data $n_{0,1}$ and $n_{0,2}$, defined in \eqref{n01}-\eqref{n02} with mass $M=6\pi$. If $\delta=0$, the solution blows up in finite time and the blow up occurs in a corner as in the square domain (see Figure \ref{fig.nonsymm.r0}). If $\delta=10^{-3}$, the approximate solutions converge to a non-homogeneous steady state (Figure \ref{fig.nonsymm.r3}). Interestingly, before moving to the corner, the solution evolving from the nonsymmetric initial datum $n_{0,2}$ shows some intermediate behavior; see Figure \ref{fig.nonsymm.r3}b.

\begin{figure}[ht]
\centering
\subfigure[Initial datum $n_{0,1}$, $t=0.5$.]{\includegraphics[width=70mm]{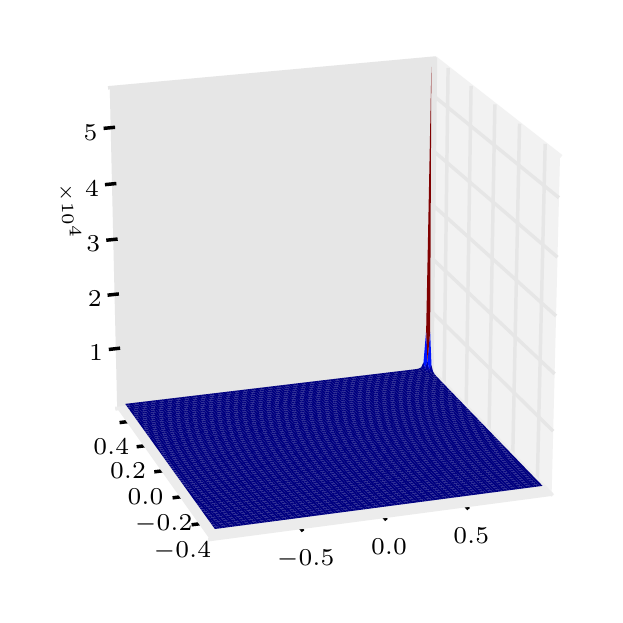}}
\subfigure[Initial datum $n_{0,2}$, $t=1.7$.]{\includegraphics[width=70mm]{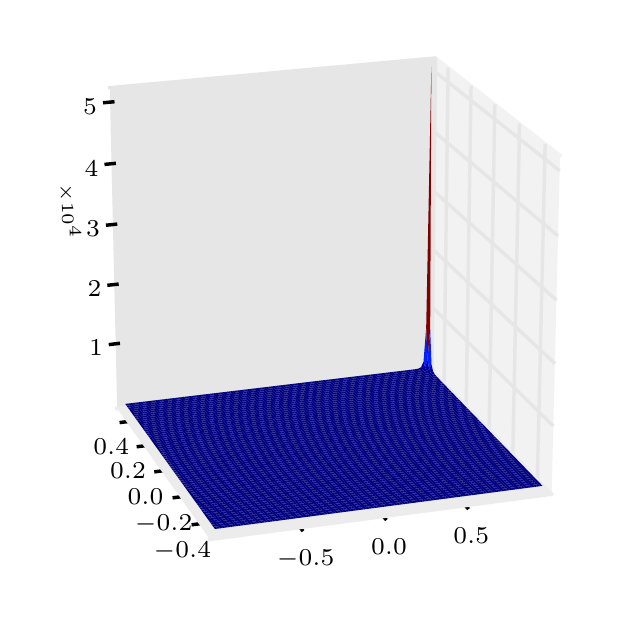}}
\caption{Cell density computed from nonsymmetric initial data with $M=6\pi$ and $\delta=0$.}
\label{fig.nonsymm.r0}
\end{figure}

\begin{figure}[ht]
\centering
\subfigure[Initial datum $n_{0,1}$, $t=1$.]{\includegraphics[width=70mm]{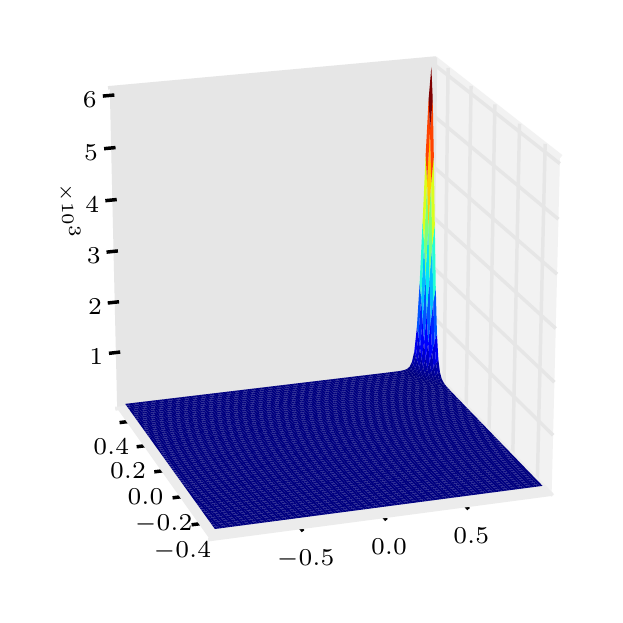}}
\subfigure[Initial datum $n_{0,2}$, $t=1$.]{\includegraphics[width=70mm]{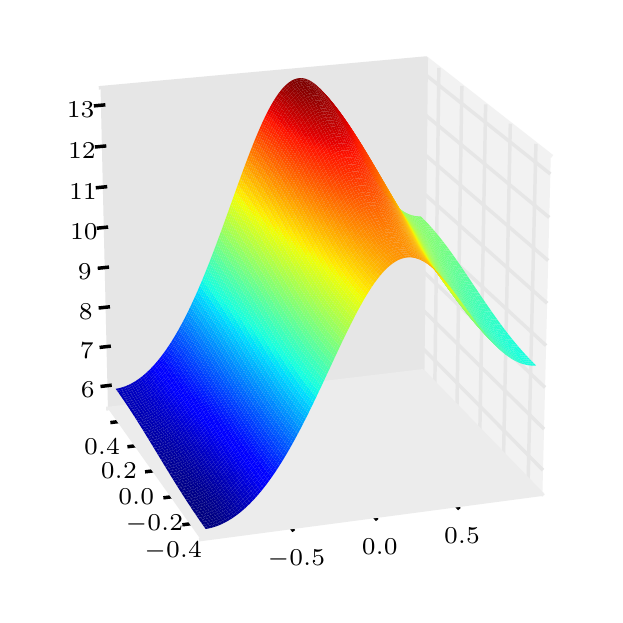}}
\subfigure[Initial datum $n_{0,1}$, $t=5$.]{\includegraphics[width=70mm]{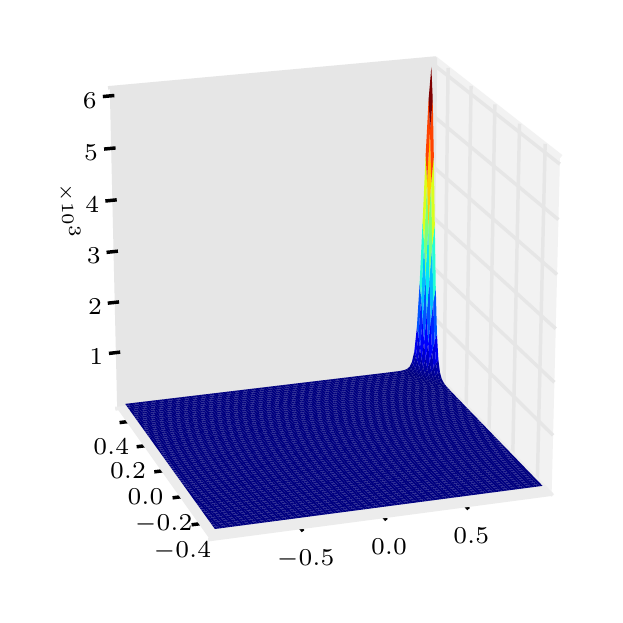}}
\subfigure[Initial datum $n_{0,2}$, $t=5$.]{\includegraphics[width=70mm]{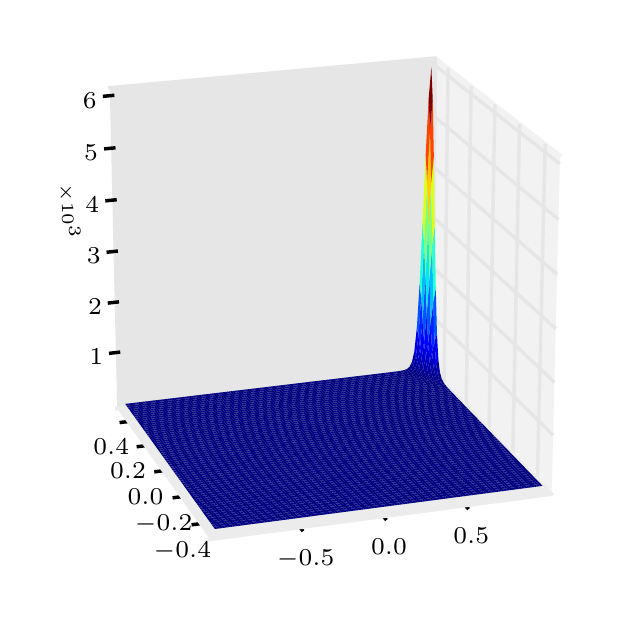}}
\caption{Cell density computed from nonsymmetric initial data with $M=6\pi$ and $\delta=10^{-3}$.}
\label{fig.nonsymm.r3}
\end{figure}


\subsection{Symmetric initial data on a rectangle}

The domain is still the rectangle $\Omega=(-1,1)\times(-\frac12,\frac12)$, we take a $128\times 64$ Cartesian grid, $\mu=1$, and $\Delta t=10^{-5}$. We choose the initial datum $n_{0,3}$, defined in \eqref{n03}, with $M=20\pi$. Clearly, the approximate solution to the classical Keller-Segel model $\delta=0$ blows up in finite time in the center $(0,0)$ of the rectangle. When $\delta=10^{-3}$, the cell density peak first moves to the closest boundary point before moving to a corner of the domain, as in the square domain (Figure \ref{fig.symm.r}). However, in contrast to the case of a square domain, there exist {\em two} intermediate states, one up to time $t\approx 0.9$ and another in the interval $(0.9,2.3)$, and one final state for long times (see Figure \ref{fig.symm.rL}). We note that the same qualitative behavior is obtained using $\delta=10^{-2}$.

\begin{figure}[ht]
\centering
\subfigure[$t=0.7$.]{\includegraphics[width=53mm]{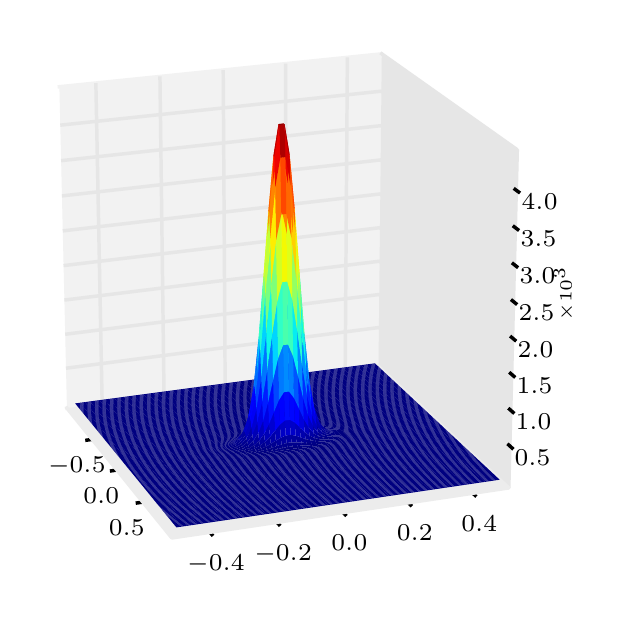}\label{subfigure16_1}}
\subfigure[$t=1.9$.]{\includegraphics[width=53mm]{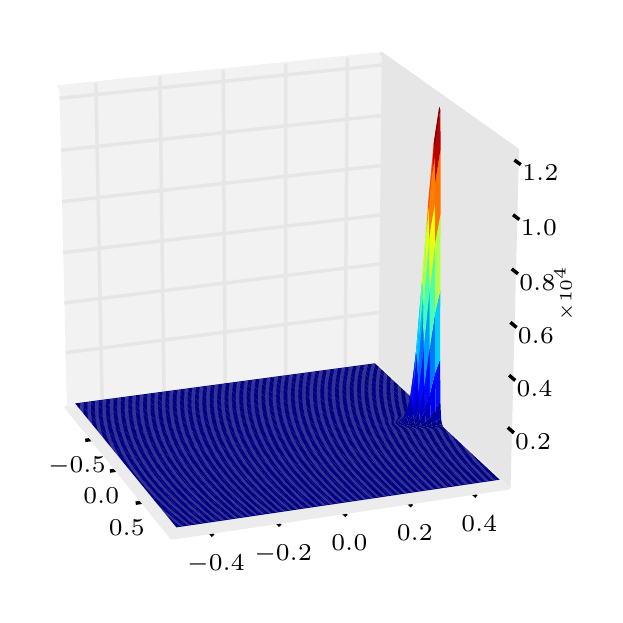}\label{subfigure16_2}}
\subfigure[$t=2.5$.]{\includegraphics[width=53mm]{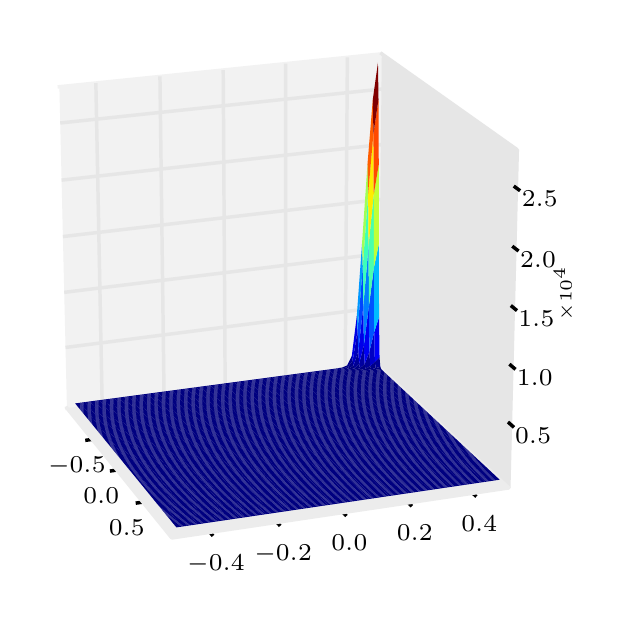}\label{subfigure16_3}}
\caption{Cell density computed from the symmetric initial datum $n_{0,3}$ with $M=20\pi$ and $\delta=10^{-3}$.}
\label{fig.symm.r}
\end{figure}

\begin{figure}[ht]
\centering
\includegraphics[width=70mm]{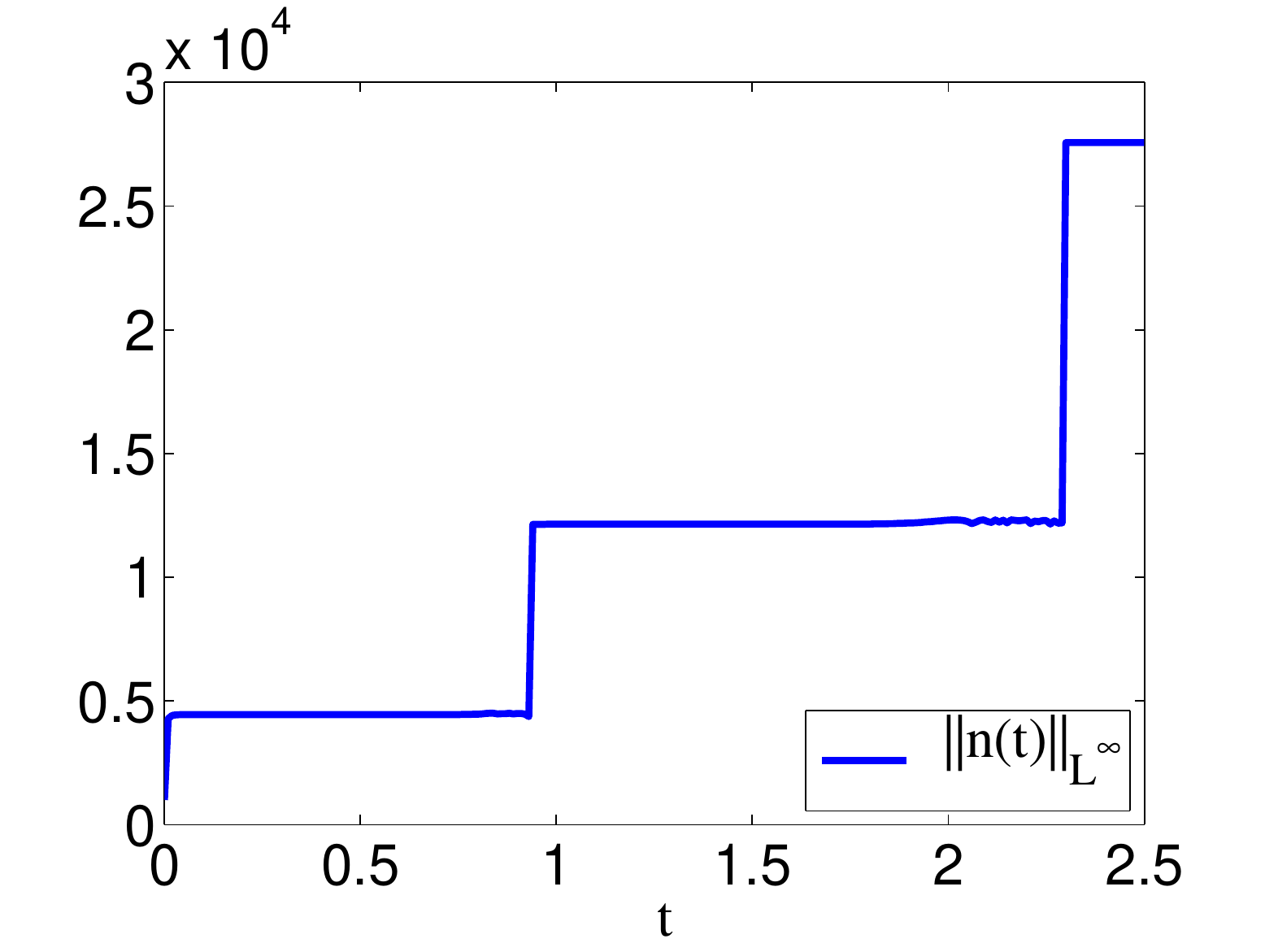}
\caption{Time evolution of $\|n^k\|_{L^\infty(\Omega)}$ computed from the radially symmetric initial datum $n_{0,3}$ with $M=20\pi$ and $\delta=10^{-3}$.}
\label{fig.symm.rL}
\end{figure}


\end{document}